\def\GG{{\mathbb G}}
\def\NN{{\mathbb N}}
\def\PP{{\mathbb P}}
\def\WW{{\mathbb W}}
\def\ZZ{{\mathbb Z}}
\newcommand{\ul}{\underline}
\def\Gscr{{\mathscr G}}
\def\Nscr{{\mathscr N}}
\def\Pscr{{\mathscr P}}
\def\Xscr{{\mathscr X}}
\def\cal{\mathcal}
\def\cO{{\cal O}}
\def\Spec{\operatorname{Spec}}
\def\Ext{\operatorname{Ext}}
\def\Hom{\operatorname{Hom}}
\def\Proj{\operatorname{Proj}}
\def\Lie{\operatorname{Lie}}
\def\0ol{{\bar 0}}
\def\1ol{{\bar 1}}
\def\2ol{{\bar 2}}
\def\ol2{{\bar 2}}
\def\3ol{{\bar 3}}
\def\4ol{{\bar 4}}
\def\5ol{{\bar 5}}
\def\6ol{{\bar 6}}
\def\7ol{{\bar 7}}
\def\8ol{{\bar 8}}
\def\9ol{{\bar 9}}
\def\P2Skly{\PP^2_{Skly}}
\def\coker{\operatorname {coker}}
\def\Ext{\operatorname {Ext}}
\def\ker{\operatorname {ker}}
\def\Lie{\operatorname {Lie}}
\def\supp{\operatorname {supp}}
\def\Aut{\operatorname{Aut}}
\def\dim{\operatorname{dim}}
\def\Ext{\operatorname{Ext}}
\def\Hom{\operatorname{Hom}}
\def\id{\operatorname{id}}
\def\mod{{\sf mod}\ }
\def\Proj{\operatorname{Proj}}
\def\Rep{{\sf Rep\ }}
\def\Spec{\operatorname{Spec}}
\def\ul1{\operatorname{\underline{1}}}
\def\fR{{\mathfrak R}}
\def\fg{{\mathfrak g}}
\def\fh{{\mathfrak h}}
\def\fn{{\mathfrak n}}
\def\fp{{\mathfrak p}}
\def\ft{{\mathfrak t}}
\def\dirlim{\mathop{\vtop{\baselineskip -100pt\lineskip -1pt\lineskiplimit 0pt
\setbox0\hbox{lim}\copy0\hbox to \wd0{\rightarrowfill}}}\limits}
\def\invlim{\mathop{\vtop{\baselineskip -100pt\lineskip -1pt\lineskiplimit 0pt
\setbox0\hbox{lim}\copy0\hbox to \wd0{\leftarrowfill}}}\limits}
\def\I11{{1 \kern -0.8pt \! \mbox{l}}}
\def\mumu{{\mu\kern-4.2pt\mu}}
\def\bfmu{{\mu\kern-4.2pt\mu}}
\def\2slash{\backslash \! \backslash}
\def\boxtimes{\setbox0\hbox{$\Box$}\copy0\kern-\wd0\hbox{$\times$}}
\def\Supph\operatorname{Supph}
\def\sl{\mathfrak{sl}}
\theoremstyle{definition}
\numberwithin{equation}{subsection}
\newtheorem{thm}[equation]{Theorem}
\newtheorem*{thm*}{Theorem}
\newtheorem{prop}[equation]{Proposition}
\newtheorem{definition}[equation]{Definition}
\newtheorem{example}[equation]{Example}
\newtheorem{lemma}[equation]{Lemma}
\newtheorem*{lemma*}{Lemma}
\newtheorem{cor}[equation]{Corollary}
\newtheorem*{note*}{Note}
\newtheorem*{conjecture*}{Conjecture}
\title{Families of Green rings for abelian restricted Lie algebras}
\author{Justin Bloom}
\date{May 2025}
\begin{document}

\maketitle
\begin{abstract}
    We find equivalent conditions determining the representation type of abelian restricted Lie algebras in terms of how their Green ring of restricted representations varies with respect to different cocommutative Hopf algebra structures on its restricted universal enveloping algebra. Each compatible cocommutative Hopf algebra structure on a tame algebra is shown to have a correspondence between a certain set of Hopf subalgebras, and the set of minimal thick tensor-ideals having identical ring structure when determined by either the Hopf algebra structure or the base
    Lie algebra structure (up to a choice of character group). Those of wild representation type are shown never to have such a correspondence. 
\end{abstract}

\tableofcontents

\section*{Introduction}
The Green ring, or ring of representations, for a finite group scheme, is generally difficult to calculate. For group schemes with infinitely many nonisomorphic indecomposable representations (said to be of infinite representation type), any number of ad-hoc inductive methods may be necessary to calculate the isomorphism type of each tensor pairing. Those of infinite representation type are said to be \textit{tame} if the isomorphism classes of indecomposable representations are parameterized by one continuous and one discrete variable. Any other group scheme of infinite representation type is known to be \textit{wild}, meaning the indecomposable representations are parameterized by uncountably many independent variables (see e.g. Erdmann \cite{Erdmann90}), and hence a complete calculation of the Green ring is untenable.

Our work concerns how the Green ring changes with respect to a Hopf algebra structure on a given algebra. The representations of a finite group scheme $G$ correspond to comodules over the commutative Hopf algebra $\cO(G)$, and to modules over the group algebra $kG := \cO(G)^*.$ The tensor product $V \otimes W$ of $kG$-modules $V$ and $W$ is naturally a $kG \otimes kG$ module, and is given the structure of a $kG$ module via the pullback of comultiplication $kG \to kG \otimes kG.$ That is to say, if we fix a finite associative algebra, the tensor product of modules is dependent on a choice of comultiplication structure. There is much that can be deduced a priori about the Green ring structure, independent of choice of comultiplication structure. For one, comultiplications are assumed cocommutative and coassociative which tells us in turn that the tensor product endows modules with the structure of a symmetric monoidal category, compatible with the forgetful functor to vector spaces. Then the Green ring $\mathfrak{R}$ is a commutative associative ring, with a fixed map $\mathfrak{R} \to \mathbb{Z}$, induced $\ZZ$-linearly from the dimension of modules, and this is a homomorphism of rings no matter which comultiplication structure is chosen to define the product for $\mathfrak{R}$. The more delicate feature of the Green ring we are concerned with is the role of \textit{cohomological support}. 

Support theory offers a topological space $\Xscr(G)$ associated to a finite group scheme $G$, such that each finite representation $V$ has associated to it a closed subspace $\Xscr(G, V)\subset \Xscr(G)$. This assignment satisfies structural properties, including the intersection property
\[\Xscr(G, V \otimes W) = \Xscr( G, V) \cap \Xscr(G, W).\]
Our main Theorem \ref{mainThm} pertains to universal enveloping algebras of abelian restricted Lie algebras as fixed associative algebras of tame and wild representation type. We define in \ref{PAdef} a certain \textit{Property PA} which finds points in the `locus of rigidity' for the Green ring structure with respect to any cocommutative Hopf comultiplication. If two group schemes $G, G'$ correspond to two different cocommutative comultiplications on an algebra $A$ inducing tensor products $\otimes, \otimes'$ of $A$-modules, one wishes to predict when a pair of modules $V, W$ has a (non-canonical, ad-hoc) isomorphism $V \otimes W \cong V \otimes' W$. An algebra has Property PA if a certain prediction in terms of cohomological support holds true. 
Our Theorem \ref{mainThm} states that every abelian restricted enveloping algebra of tame representation type has Property PA, and that every such algebra of wild representation type does not have Property PA. 

Definition \ref{nobleDef} of Property PA is contingent on two fundamental aspects of cohomological support. One aspect comes from the work of Friedlander et al. on the variety of 1-parameter subgroups of an infinitesimal group scheme, which we review in Section \ref{SectionNullcone}. The other aspect is the invariance of support theory with respect to cocommutative Hopf algebra structures on a fixed augmented algebra. The latter follows from a brief argument involving the definition of the cohomological variety as 
\[\Xscr(G) := \Proj H^*(G, k),\]
where $H^*(G, k)= \Ext^*_{kG}(k,k)$, the graded-commutative cohomology ring.
Now, using a restricted Lie algebra $\fg$ to fix an augmented algebra $A = u(\fg)$ with trivial representation $k$ leaves us with support theory $\Xscr(A, V) = \Xscr(\fg, V) = \Xscr(G, V)$ giving the same subspaces of $\Proj \Ext^*_A(k, k)$ for each module $V$ of $A$ and each group scheme structure $G$ compatible with the fixed augmented algebra $A$. The space $\Xscr(A) = \Xscr(A, k)$ consists of 1-parameter subgroups of the infinitesimal group scheme $\widetilde G$ associated to the Lie algebra, but for other group schemes $G$, these subgroups of $\widetilde G$ are visible only as points in the spectrum of cohomology, and not necessarily as subgroups. An algebra has Property PA if, for all possible group algebra structures $kG$ compatible with the augmented algebra $A$ and the character group $\chi(\widetilde G)$, the points of $\Xscr(A)$ which \textit{are} visible as subgroups of $G$ (c.f. \ref{nobleDef}) are all points where the Green ring for $G$ agrees with that of $\fg$.

While the isomorphism type of $V \otimes W$ depends on the comultiplication a priori, the support of $V \otimes W$ is invariant of how $\otimes$ is defined, as it always coincides with 
\[\Xscr( A, V) \cap \Xscr ( A, W).\] For tame algebras $A$ such that modules are classified with respect to the support, this can be a useful invariant for Green ring calculations. For example, the Kronecker algebra $A = k[x,y]/(x^2, y^2)$ in characteric 2 is known to have, for any closed point $\fp \in \Xscr(A)$, only finitely many modules of a given dimension $n$ and support $\{\fp\},$ up to isomorphism. The classification of Hopf algebra structures on the Kronecker algebra, due to X. Wang \cite{XWang13}, is the core of the argument for showing that tame abelian restricted Lie algebras satisfy Property PA in Theorem \ref{mainThm}, which makes use of explicit calculations of tensor products with respect to each member of a complete set of isomorphism classes of (cocommutative) Hopf algebra structures.
Our work on products of modules over the Kronecker algebra in Section \ref{tameSection} intends to illuminate the formulas of Ba{\v s}ev and Conlon, who calculated the Green ring for the Klein 4-group in the 1960s \cite{Basev61}, \cite{Conlon65}. Our main Theorem \ref{mainThm} intends to use the range of isomorphism types of $V \otimes W$ as a measure of complexity for a given augmented algebra. It is shown then for a nice class of algebras that this measure orders the algebras in accordance with representation type. 

\section*{Acknowledgements}
We thank our advisor, Professor Julia Pevtsova, for years of wonderful mentorship. We also thank Professors Dave Benson, Pavel Etingof, James Zhang, and Xingting Wang for lending their expertise in times of need. We thank our colleague Alex Waugh for many lengthy conversations on various connected topics over the years. 
This work has been supported by NSF DMS 220-0832. 
\section*{Preliminaries}
\subsubsection*{Representations of finite group schemes}
\begin{itemize}[label={--}]
    \item We will always use $k$ to denote an algebraically closed field of positive characteristic $p.$ 
    \item \textit{Algebras, coalgebras,} and \textit{bialgebras} over $k$, as well as \textit{modules} and \textit{comodules} over algebras and coalgebras respectively, are as defined in Waterhouse \cite{Wa79}. A \textit{Hopf algebra} is a bialgebra with antipode. 
    \item For a vector space $V$ over $k$, we denote $V^* = \Hom_k(V, k)$ the \textit{linear dual.} The linear dual of an algebra, coalgebra, bialgebra, or Hopf algebra, is naturally a coalgebra, algebra, bialgebra, or Hopf algebra respectively.   
    \item \textit{Group schemes} are always over $k$ and always assumed affine. That is, a group scheme $G$ is defined as $\Spec \cO(G)$ for a commutative Hopf algebra $\cO(G)$, called the coordinate algebra of $G$.
    \item \textit{Finite group schemes} are those $G$ such that $\cO(G)$ is a finite dimensional vector space over $k$. The dimension of $\cO(G)$ is also called the order of $G$, and is denoted $|G|$.
    \item When $G$ is a finite group scheme, we denote by $kG$ the \textit{group algebra}, which is a cocommutative Hopf algebra, given by the linear dual $\cO(G)^*$. As such, $\Spec A^*$ for a finite dimensional cocommutative Hopf algebra $A$ always defines a finite group scheme, and finite group schemes are equivalent to finite dimensional cocommutative Hopf algebras, called group algebras, using the canonical isomorphism $A \xrightarrow{\sim} A^{**}$.
    \item A \textit{grouplike} element $x$ of a Hopf algebra $A$ is an element such that $\Delta(x) = x \otimes x,$ where $\Delta$ is the comultiplication for $A$. The set of grouplike elements in any Hopf algebra forms a group. 
    \item If $G$ is a finite group scheme such that the group algebra $kG$ has a basis of grouplike elements, then $G$ is said to be a \textit{constant group scheme}. Indeed, every finite group is also a discrete topological group with scheme structure the disjoint union of $|G|$ copies of $\Spec k$. This scheme is a constant group scheme and finite groups are equivalent via this construction to constant group schemes over $k$.  
    \item A \textit{representation} of a finite group scheme $G$ is a (left) module over the group algebra $kG.$ Representations are assumed to be finite dimensional as vector spaces over $k$. 
    \item The \textit{representation type} of a finite algebra is said to be \textit{finite, tame,} or \textit{wild}, depending on the difficulty of classifying its modules. Finite, tame, and wild are stated in order of increasing difficulty, with wild thought of as impossible. We will take these notions as defined in Erdmann \cite{Erdmann90}. The representation type of a finite group scheme is defined to be that of its group algebra.
    \item If $G$ is a group scheme, we denote by $\chi(G)$ the \textit{group of characters}, i.e. of isomorphism classes of 1-dimensional representations. The group structure is defined pointwise on homomorphisms $G \to \GG_m,$ and agrees with the group of grouplike elements in the coordinate algebra $\cO(G)$.
    \item A group scheme $G$ is called \textit{unipotent} if there is only one simple representation, up to isomorphism. Then a finite group scheme $G$ is unipotent if and only if $kG$ is a local algebra. 
\end{itemize}
\subsubsection*{Symmetric tensor categories and Green rings}
\begin{itemize}[label={--}]
    \item When $V$ and $W$ are vector spaces over $k$, we denote $V \otimes W$ the \textit{tensor product} of $k$-modules. Then $\otimes$ is a bilinear symmetric monoidal product. 
    \item A \textit{symmetric tensor category} is as defined in Etingov, Gelaki, Nikshych, and Ostrik \cite{EGNO15}. 
    \item For a finite group scheme $G$, we denote $\Rep G$ the category of representations of $G$, i.e. the category of modules over $A = kG$. Then $\Rep G$ is canonically an essentially small symmetric tensor category: 
    \begin{itemize}[label={*}]
        \item If $V$ and $W$ are modules over $A$, then $V \otimes W$ is canonically a module over $A\otimes A$. Then $\otimes$ defines an exact symmetric monoidal product of $A$-modules, with $V \otimes W$ an $A$-module via the pullback along the coassociative cocommutative comultiplication $A \to A \otimes A$.
        \item The counit (augmentation) map $A \to k$ defines $k$ to be a module over $A.$ The module $k$ is the monoidal unit with respect to the product $\otimes.$
        \item If $V$ is an $A$ module, the linear dual $V^*= \Hom_k(V, k)$ is an $A$ module by the action defining a functional
        \[a \cdot f (v) = f(S(a)\cdot v)\]
        for each $f \in V^*$, $a \in A,$ and $v \in V,$ where $S : A \to A$ is the antipode. 
    \end{itemize}
    \item An object $X$ in an abelian category is called \textit{indecomposable} if whenever $X \cong Y \oplus Z$, either $Y \cong {\bf 0}$ or $Z \cong {\bf 0}$. By the Krull-Schmidt theorem, all representations of a group scheme are isomorphic in a unique way to a finite sum of indecomposable representations, up to permutation. This property also follows for symmetric tensor categories, assuming rigidity. 
    \item When $\sf C$ is an essentially small symmetric tensor category, the \textit{Green ring} $\fR({\sf{C}})$ is defined to be the free associative $\mathbb{Z}$-algebra generated by isomorphism classes $[X]$ of objects $X$ in ${\sf C}$, quotiented by the ideal generated by elements in the form 
    \[1 - [{\bf 1}], \qquad [X] + [Y] - [X \oplus Y], \qquad [X][Y] - [X \otimes Y].\]
    Then $\fR({\sf C} )$ is a commutative ring, and for symmetric tensor categories ${\sf C}$,  is isomorphic as a $\ZZ$-module to the free module on isomorphism classes of indecomposable objects of ${\sf C}.$ When $G$ is a finite group scheme, we denote $\fR(G) = \fR(\Rep G)$ the Green ring for $G$. 

\end{itemize}
\subsubsection*{Restricted Lie algebras}
\begin{itemize}[label={--}]
    \item \textit{Restricted Lie algebras} over $k$ are as defined by Jacobson \cite{Jac41}. We assume restricted Lie algebras are always finite dimensional. 
    \item When $\fg$ is a restricted Lie algebra, we adopt the convention that a \textit{Lie subalgebra} $\fh \subset \fg$ is restricted, i.e. a subspace $\fh$ of $\fg$ such that $[\fh, \fh]\subset \fh$ and $\fh^{[p]} \subset \fh.$
    \item When $A$ is a finite associative algebra over $k$, $\Lie^{[p]}(A)$ denotes the restricted Lie algebra on $A$, endowing $A$ with the structures
    \[[x, y ] = xy - yx, \qquad x^{[p]} = x^p.\]
    \item If $\fg$ is a restricted Lie algebra of dimension $r$ over $k$, we denote $u(\fg)$ the \textit{restricted universal enveloping algebra} (or simply the \textit{restricted enveloping algebra}), which is an associative algebra of dimension $p^r$ over $k$. 
    \item Induced maps are defined making $(u, \Lie^{[\fp]})$ an adjoint pair of functors, between the category of restricted Lie algebras and the category of finite associative algebras over $k$. 
    \item \textit{The Poincar{\'e}-Birkhoff-Witt} (PBW) theorem, found in \cite{Jac41}, defines a filtration, called the \textit{PBW filtration}, on $u(\fg)$ such that the associated graded algebra is the quotient $S^*(\fg) / (x^p)_{x \in \fg}$ of the symmetric algebra $S^*(\fg).$
    \item The adjunction map $\fg \to \Lie^{[p]}(u(\fg))$ is injective, so we regard $\fg$ as canonically a subspace of $u(\fg)$. Further, passing to the associated graded is an isomorphism taking $\fg$ to the degree $1$ subspace. 
    \item If $A$ is a coalgebra over $k$ with comultiplication $\Delta : A \to A\otimes A,$ and a fixed compatible unit $1 \in A$ (such that $k \to A$ is a map of coalgebras), denote \[P(A) = \{ x \in A \mid \Delta(x) = x \otimes 1 + 1 \otimes x\}\]
    the \textit{primitive subspace} of $A$. Elements $x \in P(A)$ are called primitive elements. Indeed, $P(A) \subset A$ is a linear subspace. When $A$ is a finite bialgebra, $P(A)$ is a Lie subalgebra of $\Lie^{[p]}(A).$
    \item The map $x \mapsto x \otimes 1 + 1 \otimes x$ is a homomorphism $\fg \to \Lie^{[p]}(u(\fg)\otimes u(\fg)),$ defining a comultiplication $u(\fg) \to u(\fg) \otimes u(\fg)$ making $u(\fg)$ a cocommutative Hopf algebra. 
    It follows from the PBW filtration that $P(u(\fg)) = \fg.$
    \item If $A$ is any Hopf algebra which is generated as an associative algebra by the subspace $\fg = P(A),$ then the map $u(\fg) \to A$, induced from the inclusion $\fg \hookrightarrow \Lie^{[p]}(A)$, is an isomorphism of Hopf algebras.
    \item Let $\fg$ be a restricted Lie algebra. Since $u(\fg)$ is a cocommutative Hopf algebra, we have $\widetilde G = \Spec u(\fg)^*$ is a finite group scheme. We call $\widetilde G$ the \textit{infinitesimal group scheme} associated to $\fg$. We will refrain from referencing infinitesimal group schemes in general, which are defined to be finite and topologically connected. 
    \item A \textit{representation} of a restricted Lie algebra $\fg$ is a representation of its infinitesimal group scheme $\widetilde G$, i.e. a module over $u(\fg)$ which is finite dimensional as a vector space over $k$. Elsewhere in the literature, what we call representations are often distinguished as `restricted representations', but we refrain from referencing Lie algebras in general to warrant this distinction. We denote $\Rep \fg = \Rep \widetilde G$ the category of representations, a symmetric tensor category. The representation type of a restricted Lie algebra is that of its restricted enveloping algebra. 
    \item For $\fg$ a restricted Lie algebra, we denote $\fR(\fg) = \fR(\Rep \fg) = \fR( \widetilde G)$ the Green ring for $\fg.$ 
\end{itemize}

\section{Abelian restricted Lie algebras}\label{abelianLieAlg}
We begin by reviewing the geometry of the nullcone for restricted Lie algebras, as well as the machinery of $\pi$-points due to Friedlander and Pevtsova \cite{FrPev07}, \cite{FrPev05}. We include in Section \ref{SeligmanSection} the classification of abelian restricted Lie algebras due to Seligman \cite{Sel67}, according to representation type. We conclude this section with definitions of noble points and of Property PA, and with the statement of our main Theorem \ref{mainThm}. 
\subsection{Nullcone and cohomology}\label{SectionNullcone}
\begin{definition}
Let $\fg$ be a restricted Lie algebra over a field $k$ of characteristic $p$. The \textit{$r$th restricted nullcone} of $\fg$, denoted $\Nscr_{r}(\fg)$, is defined to be the subset of $\fg$ given by 
\[\Nscr_{r}(\fg) = \{ x \in \fg \mid x^{[p]^r} = 0\}.\]
With $\Nscr_0(\fg) = {\bf 0}$ we also define $\Nscr(\fg) = \bigcup_{r}\Nscr_{r}(\fg)$ to be the \textit{nullcone} of $\fg.$
\end{definition}
Given a restricted Lie algebra $\fg$, the elements in the difference of subsets $\Nscr_r(\fg) \setminus \Nscr_{r-1}(\fg)$ are said to have \textit{order $r$}. When $x \in \Nscr(\fg)$ has order $r$, denote $\langle x \rangle \subset \fg$ the Lie subalgebra with basis $x, x^{[p]}, \dots, x^{[p]^{r-1}}.$ 
Each $\Nscr_r(\fg)$ is a homogeneous closed subvariety (i.e. a cone) of the affine space with points in $\fg$. The work of Suslin, Friedlander, and Bendel \cite{SFB97}
establishes a homogeneous homeomorphism 
\[\fp : \Nscr_1(\fg) \setminus {\bf 0} \to \Spec H^*(\fg, k)\]
of closed points,
hence we also have $\PP(\Nscr_1(\fg)) \to \Proj H^*(\fg, k)$ a homeomorphism of closed points. Here we have abused notation to construct a scheme $\Proj H^*(\fg, k)$, which we define actually as $\Proj R$, where $R$ is the commutative subring of $H^*(\fg, k)$ defined by
\[R = \begin{cases}\sum_{i \ge 0} H^{2i}(\fg, k) & p> 2\\
H^*(\fg, k) & p = 2.\end{cases}\]
In fact the ring $H^*(\fg, k)$ is a finitely generated algebra, hence so is $R$, as shown for restricted Lie algebras by Friedlander and Parshall in \cite{FrPar86annalen}. Later Friedlander and Suslin \cite{FS97} would generalize this result, as well as the results of Golod, Venkov, and Evens on finite groups \cite{Venkov59}, \cite{Golod59}, \cite{Evens61}, to cohomology rings for arbitrary finite group schemes. Thus, we will refer to $\Proj H^*(G, k)$ as a projective variety, possibly reducible, and forgetting any structure sheaf and nilpotency in favor of homogeneous coordinate systems when necessary, whenever $G$ is a finite group scheme (by the same abuse of notation on the graded commutative cohomology ring). Similarly we have support spaces for finite representations of $G$, to be defined later, which we will refer to also as projective varieties.

Given a nonzero $x \in \Nscr_1(\fg)$, the point $\fp(x) \in \Proj H^*(\fg, k)$ is defined to be the radical of the kernel of the map $H^*(\fg, k) \to H^*(\langle x \rangle ,k)$
induced by the inclusion map $\langle x \rangle \hookrightarrow \fg.$ It is shown that $\fp$ is well defined, i.e. that the induced map of graded rings is always nondegenerate (c.f. \textit{generalized $\pi$-points} below).  
\begin{definition}
    Let $G$ be a finite group scheme over an algebraically closed field $k$ of characteristic $p$. 
    \begin{itemize}[label={--}]
        \item A \textit{$\pi$-point} (over $k$) of $G$ is a flat map $\alpha : k[t]/t^p \to kG$ of associative algebras which factors through the inclusion $kU \hookrightarrow kG$ of some unipotent abelian subgroup scheme $U < G.$
        \item Two $\pi$-points $\alpha, \beta$ are said to be equivalent, denoted $\alpha \sim \beta$, if for any finite module $M$ over $kG$, the pullback $\alpha^*(M)$ is free over $k[t]/t^p$ if and only if $\beta^*(M)$ is free. 
        \item If $\alpha$ is a $\pi$-point for $G$, the projective point $\fp(\alpha) \in \Proj H^*(G, k)$ induced by $\alpha$ is defined to be the radical of the kernel of the induced map
        \[H^*(G, k) \xrightarrow{H^*(\alpha)} H^*(k[t]/t^p, k),\]
        of augmented algebra cohomology rings. 
    \end{itemize}
\end{definition}
Friedlander and Pevtsova originally defined flat maps factoring through an abelian subgroup to be `abelian $p$-points' in \cite{FrPev05}. In \cite{FrPev07}, $\pi$-points were defined over field extensions of $k$, requiring a unipotent abelian factor, both to give support spaces that include non-closed points for infinite modules, and to make corrections for proving geometric properties of support varieties for finite group schemes.
We will continue to assume all $\pi$-points are defined over $k$ while taking the results of \cite{FrPev07} as given. 

It is shown that $\fp(\alpha)$ is always a closed point in $\Proj H^*(G, k)$, i.e. that the induced map $H^*(\alpha)$ of graded rings is nondegenerate. Further, it is shown for each closed point $\fp \in \Proj H^*(G, k)$, that there exists a $\pi$-point $\alpha$ such that $\fp = \fp(\alpha)$, and that two $\pi$-points $\alpha, \beta$ are equivalent if and only if $\fp(\alpha) = \fp(\beta).$

Now we see that for a restricted Lie algebra $\fg$, given an element $ x \in \Nscr_1(\fg) \setminus {\bf 0}$, we have the inclusion $\alpha : u(\langle x \rangle ) \to u(\fg)$ is a $\pi$-point of the infinitesimal group scheme $\widetilde G$. We have also that $\fp(x) = \fp(\alpha)$ by definition. For the infinitesimal group scheme associated to some $\fg$, it follows that each $\pi$-point is equivalent to the inclusion $u(\langle x \rangle ) \to u(\fg)$ for some $x \in \Nscr_1(\fg) \setminus {\bf 0}.$

\begin{definition}
    Let $G$ be a finite group scheme, and let $M$ be a representation of $G$. The \textit{cohomological support} of $M$, denoted $\supp_G(M)$, is defined by the subvariety of $\Proj H^*(G, k)$ cut out by the ideal given as the kernel
    \[\Ext_{kG}^*(k, k) \xrightarrow{-\otimes M} \Ext_{kG}^*(M, M).\]
\end{definition}
It is shown in \cite{FrPev07}, \cite{FrPev05} that the closed points of $\supp_G(M)$ are equivalent, under the correspondence with equivalence classes of $\pi$-points, to the set
\[\{[\alpha] \mid \alpha^*(M)\text{ is not free over }k[t]/t^p\}.\]
The latter is called \textit{$\pi$-support}, or rather the closed points of $\pi$-support as originally defined. 
We denote the set of closed points in $\Proj H^*(G, k)$ by $\Xscr(G)$, and 
\[\Xscr(G, M) = \supp_G(M) \cap \Xscr(G),\]
identified with the $\pi$-support of $M$ as a set of points over $k$.

In light of our study comparing cocommutative Hopf algebra structures on a given algebra, we say an augmented algebra $A$ is \textit{groupable} if it isomorphic to the group algebra for some finite group scheme $G$, augmented by its counit. 
Given two cocommutative comultiplications $\Delta, \Delta' : A  \to A \otimes A$ for group scheme structures $G, G',$ giving products $\otimes, \otimes'$, we have for any finite module $M$ that 
\[\supp_G(M) = \supp_{G'}(M).\]
This can be argued by relating both back to $\pi$-support:

A $\pi$-point $\alpha : k[t]/t^p \to A$ for $G$ is not necessarily a $\pi$-point for $G'$ by definition, as the existence of a unipotent subgroup depends on the group structure. However, the induced map $H^*(\alpha)$ is nondegenerate, i.e. $H^+(A, k) \not\subseteq \sqrt{\ker H^*(\alpha)}$, provided any group structure exists such that $\alpha$ factors through a unipotent subgroup algebra. Such nondegenerate flat maps $\alpha$ may be called \textit{generalized $\pi$-points}, and by definition $\fp(\alpha) = \sqrt{\ker H^*(\alpha)}$ is a closed point in $\Proj H^*(A, k)$, since the reduced algebra $H^*(k[t]/t^p, k)_{\mathrm{red}}$ is isomorphic to the graded algebra $k[\xi]$ for some positive degree generator $\xi$. The same equivalence relation $\sim$ is defined for generalized $\pi$-points $\alpha, \beta$ as for $\pi$-points, and $\alpha \sim \beta$ if and only if $\fp(\alpha) = \fp(\beta)$. Hence when $A$ is a groupable augmented algebra, we can define $\supp_A(M)$ unambiguously as a set of equivalence classes of generalized $\pi$-points, realized as the subset of $\Proj H^*(A, k ),$ and coinciding with $\supp_G(M)$ for any group scheme structure $G$ on $A$. Likewise we denote unambiguously $\Xscr(A, M) \subset \Xscr(A)$ the sets of closed points for a groupable augmented algebra $A$ and any module $M$. 



\subsection{Seligman's structure theorem}\label{SeligmanSection}
Let $k$ be an algebraically closed field of characteristic $p$. We denote by $\fn_n$ the $p$-nilpotent cyclic Lie algebra of dimension $n$, i.e.
\[\fn_n = \langle x_1, \dots, x_n \mid x_i^{[p]} = x_{i+1}, \text{ where }x_{n+1} = 0\rangle.\]
We also denote by $\ft = \langle x  \mid x^{[p]} = x\rangle$ the $1$-torus.

We present a theorem of Seligman to give a complete description \ref{RepTypeClassification} of abelian restricted Lie algebras according to representation type. 
\begin{thm}\label{SeligmanStructure}(Seligman, 1967 \cite{Sel67}) Let $\fg$ be an abelian restricted Lie algebra of finite dimension over $k$. Then $\fg$ has a direct sum decomposition as 
\[\fg \cong \ft^r \oplus \sum_{i \ge 1} \fn_i^{s_i}\]
for some $r \ge 0$ and finitely many nonzero $s_i \ge 0.$
\end{thm}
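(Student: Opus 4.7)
The plan is to analyze $\fg$ via the $p$-power map $F \colon \fg \to \fg$, $F(x) = x^{[p]}$, which is $p$-semilinear because $\fg$ is abelian, and to split $\fg$ into a semisimple piece on which $F$ is bijective (contributing tori) and a nilpotent piece (contributing copies of $\fn_i$).

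First I would invoke the standard Fitting decomposition for the semilinear endomorphism $F$: the descending images $F^n(\fg)$ and ascending kernels $\ker F^n$ stabilize by finite-dimensionality, and setting $\fg_s = \bigcap_n F^n(\fg)$ and $\fg_n = \bigcup_n \ker F^n$ gives $\fg = \fg_s \oplus \fg_n$ as a direct sum of $F$-stable subspaces, hence of restricted Lie ideals, with $F|_{\fg_s}$ bijective and $F|_{\fg_n}$ nilpotent. This reduces the theorem to the two claims $\fg_s \cong \ft^{\dim \fg_s}$ and $\fg_n \cong \bigoplus_i \fn_i^{s_i}$.

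For the torus part, the core claim is that a bijective $p$-semilinear endomorphism on a finite-dimensional vector space over an algebraically closed field admits a basis of fixed vectors. The subscheme of $\fg_s$ cut out by $F(v) = v$ is étale over $\FF_p$ (since $F$ has vanishing differential, the Jacobian of $F - \mathrm{id}$ equals $-\mathrm{id}$), hence finite étale of rank $p^{\dim \fg_s}$; all of its $\bar k$-points are $k$-rational by algebraic closure, and the fixed points $\fg_s^F$ form an $\FF_p$-form of $\fg_s$. Any $\FF_p$-basis $x_1, \dots, x_r$ of $\fg_s^F$ is then a $k$-basis of $\fg_s$ with $x_i^{[p]} = x_i$, identifying $\fg_s$ with $\ft^r$.

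For the nilpotent part, I would proceed by induction on $\dim \fg_n$ using a Jordan-style argument for $p$-semilinear nilpotent operators. Let $r$ be the nilpotence index on $\fg_n$ and pick $v$ with $F^{r-1}(v) \neq 0$; the cyclic subspace $C = \mathrm{span}\{F^i(v) : 0 \le i < r\}$ is $F$-stable, $r$-dimensional, and visibly isomorphic to $\fn_r$. The crux is producing an $F$-stable complement: starting from a lift $u \in \fg_n$ of a cyclic generator of $\fg_n / C$ of order $s \le r$, one uses $F^r(u) = 0$ together with the $p$-semilinearity of $F$ to force $F^s(u) \in F^s(C)$, and then uses the perfection of $k$ to extract $p^s$-th roots and find $w \in C$ with $F^s(w) = F^s(u)$, so that $u - w$ is a cyclic lift of order exactly $s$. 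Iterating yields $\fg_n = \bigoplus_j \langle v_j \rangle$ with each $\langle v_j \rangle \cong \fn_{i_j}$, and grouping by $i_j$ delivers the decomposition. The main obstacle is this adjustment step, where naive lifts need not have the correct nilpotence order, and both the $p$-semilinear relation and the perfection of $k$ are essential.
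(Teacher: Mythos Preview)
The paper does not prove this theorem; it is quoted from Seligman \cite{Sel67} and used as a black box. Your outline is a correct proof and is essentially the classical one: split $\fg$ via the Fitting decomposition of the $p$-semilinear endomorphism $F(x)=x^{[p]}$, diagonalize the bijective part using an $\FF_p$-form of fixed vectors, and run a Jordan-type argument on the nilpotent part. One small wording fix: the fixed-point scheme $\{v : F(v)=v\}$ is \'etale over $k$, not over $\FF_p$; what you actually use is that it has $p^{\dim\fg_s}$ many $k$-points, which then form an $\FF_p$-vector space of the right size and hence a $k$-basis of $\fg_s$. In the nilpotent step your adjustment $u\mapsto u-w$ is exactly right, and it is worth making explicit (as you implicitly do) that the lifted cyclic subspaces, together with $C$, have total dimension equal to $\dim\fg_n$ and surject onto $\fg_n$, so the sum is automatically direct.
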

The restricted Lie algebra $\ft^r$ is semisimple. To see this, we have \[u(\ft) \cong k[x]/(x^p - x) \cong \prod_{i = 1}^p k\]
as associative algebras, hence $\ft$ is semisimple.
For restricted Lie algebras $\fg_1, \fg_2,$ we also have $u(\fg_1\oplus \fg_2) \cong u(\fg_1)\otimes u(\fg_2)$ as Hopf algebras. Therefore $\fg\oplus \ft$ has the same representation type (semisimple, finite, tame, wild) as $\fg$, for any restricted Lie algebra $\fg$, since $u(\ft \oplus \fg) \cong \prod_{i =1 }^p u(\fg)$ as associative algebras. 

For an abelian restricted Lie algebra $\fg$, it follows from definitions that the nullcone $\Nscr(\fg) \subset \fg$ is actually a Lie subalgebra of $\fg$ (for nonabelian $\fg$, we may have $\Nscr(\fg)$ is not even a linear subspace). By Theorem \ref{SeligmanStructure}, we have that $\Nscr(\fg)$ is the maximal unipotent Lie subalgebra of $\fg$, and the representation type of $\fg$ agrees with that of $\Nscr(\fg).$ In fact, $u\left(\Nscr(\fg)\right)$ is the principal block of $u(\fg)$ (see e.g. Erdmann \cite{Erdmann90}). 

\begin{note*} Let $\fg$ be an abelian restricted Lie algebra, say $\fg \cong \ft^r \oplus \sum_{i \ge 1} \fn_i^{s_i}.$ It follows, for $\widetilde G$ the associated infinitesimal group scheme, that $\chi(\widetilde G) \cong (\ZZ / p)^r.$ 
\end{note*}
The following theorem and its corollary, classifying abelian restricted Lie algebras according to representation type, appeared in the unpublished \cite{Bloom24}, and is a straightforward consequence of Seligman's structure theorem. 
\begin{thm}\label{RepTypeClassification}
    Let $\fg$ be an abelian restricted Lie algebra of finite dimension over $k$, and let $n$ be the dimension of $\Nscr(\fg)$. 
    \begin{enumerate}[I.]
        \item If $\Nscr( \fg)$ is cyclic (i.e. isomorphic to $\fn_n$), then $\fg$ is of finite representation type. 
        \item If $\Nscr(\fg)$ is not cyclic and $p^n = 4$ (i.e. $p = n = 2$), then $\fg$ is of tame representation type. 
        \item In any other case $p^n > 4$ and we have $\fg$ is of wild representation type. 
    \end{enumerate}
\end{thm}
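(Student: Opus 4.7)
The plan is to reduce the classification to the unipotent abelian case via Seligman, at which point $u(\Nscr(\fg))$ is recognized as an explicit truncated polynomial algebra and classical representation-type results for such rings can be invoked. By Theorem \ref{SeligmanStructure} and the remark following it, tensoring with a torus $\ft^r$ only replaces $u(\fh)$ by a product of $p^r$ copies of itself, so the representation type of $\fg$ coincides with that of the unipotent part $\Nscr(\fg)$. Writing $\Nscr(\fg) \cong \sum_i \fn_i^{s_i}$ and using $u(\fn_i) \cong k[x]/x^{p^i}$ together with the Hopf isomorphism $u(\fh_1 \oplus \fh_2) \cong u(\fh_1) \otimes u(\fh_2)$, one identifies the enveloping algebra with a tensor product of single-variable truncations
\[u(\Nscr(\fg)) \cong \bigotimes_{i,j} k[x_{i,j}]/x_{i,j}^{p^i},\]
so the problem reduces to determining the representation type of a commutative truncated polynomial algebra in $m := \sum_i s_i$ generators.

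Cases I and II then follow immediately: in Case I, $u(\Nscr(\fg)) \cong k[x]/x^{p^n}$ is a local Nakayama algebra with indecomposable modules $k[x]/x^j$ for $1 \le j \le p^n$, hence of finite representation type; in Case II, $u(\Nscr(\fg)) \cong k[x,y]/(x^2, y^2)$ is the mod-$2$ group algebra of the Klein four group, whose tame representation type is the foundational computation of Ba{\v s}ev and Conlon cited in the introduction.

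Case III is the substantive part. The strategy is, for each such $\fg$, to exhibit a Lie ideal $\fh \subset \Nscr(\fg)$ such that $u(\Nscr(\fg)/\fh)$ is a truncated polynomial algebra known to be of wild representation type; since pullback along the surjection $u(\Nscr(\fg)) \twoheadrightarrow u(\Nscr(\fg)/\fh)$ is fully faithful on module categories, wildness of a quotient propagates to the whole. The wildness input I would cite as a black box is that $k[y_1,\dots,y_m]/(y_i^{q_i})$ with all $q_i \ge 2$ is wild whenever either $m \ge 3$, or $m = 2$ and $(q_1, q_2) \ne (2, 2)$. I then verify three subcases: (a) $p \ge 3$ with $m \ge 2$, where the quotient $\Nscr(\fg)/\Nscr(\fg)^{[p]} \cong \fn_1^{\oplus m}$ yields $k[y_1,\dots,y_m]/(y_i^p)$; (b) $p = 2$ with $m \ge 3$, where the same quotient yields the mod-$2$ group algebra of $(\ZZ/2)^m$; and (c) $p = 2$, $m = 2$, $n \ge 3$, where $\Nscr(\fg) = \fn_{i_1} \oplus \fn_{i_2}$ forces some $i_j \ge 2$, so $u(\Nscr(\fg))$ itself is $k[x,y]/(x^{2^{i_1}}, y^{2^{i_2}})$ with $(2^{i_1}, 2^{i_2}) \ne (2,2)$.

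The main obstacle will be subcase (c): the naive reduction via $\Nscr(\fg)^{[p]}$ only produces the tame Kronecker algebra, so the hypothesis $n \ge 3$ must be brought in by hand to force a summand of $p$-order at least $2$ in Seligman's decomposition. Once the case analysis is organized, the only remaining ingredient is the classical wildness of the target truncated polynomial rings, which is standard and can be imported from the literature without re-derivation.
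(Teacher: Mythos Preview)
Your argument is correct and follows exactly the route the paper indicates: the paper does not give a detailed proof but states the theorem as ``a straightforward consequence of Seligman's structure theorem'' with a citation to the unpublished \cite{Bloom24}, and your proposal supplies precisely those details---reducing to the nullcone via Seligman, identifying $u(\Nscr(\fg))$ as a truncated polynomial algebra, and invoking the classical representation-type trichotomy for such algebras (as found e.g.\ in the reference \cite{Erdmann90} the paper already cites). Your case split in part III, including the observation that the naive quotient by $\Nscr(\fg)^{[p]}$ is insufficient when $p=2$ and $m=2$, is exactly the subtlety one expects and is handled correctly.
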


\begin{cor}\label{primordial}
Let $\fg$ be an abelian restricted Lie algebra of wild representation type with no nontrivial wild direct summands (for any decomposition $\fg \cong \fg'\oplus \fg'',$ if $\fg'$ is of wild representation type then $\fg'' = 0$).
\begin{enumerate}[I.]
    \item If $p = 2,$ then $\fg = \fn_1 \oplus \fn_1 \oplus \fn_1,$ or $\fg = \fn_n \oplus \fn_m$ for $n + m \ge 3,$ and $n, m \ge 1$.
    \item If $p > 2,$ then $\fg = \fn_n \oplus \fn_m$ for $n, m \ge 1$.
\end{enumerate}
\end{cor}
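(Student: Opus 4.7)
The plan is to apply Seligman's theorem \ref{SeligmanStructure} to write $\fg \cong \ft^r \oplus \sum_{i \ge 1} \fn_i^{s_i}$, and then translate the no-nontrivial-wild-summand hypothesis into a combinatorial constraint on the multiset $\mu$ consisting of each index $i$ listed with multiplicity $s_i$. By Theorem \ref{RepTypeClassification}, a unipotent summand $\fh = \sum_i \fn_i^{t_i}$ (with $0 \le t_i \le s_i$) is wild if and only if $\fh$ is non-cyclic (i.e.\ not isomorphic to a single $\fn_k$) and $p^{\dim \fh} > 4$. The hypothesis therefore says that every proper nonempty sub-multiset of $\mu$ must either be a singleton (yielding a cyclic summand of finite representation type) or have $p^{\dim \fh} \le 4$.

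First I would eliminate the toral part. As noted following Theorem \ref{SeligmanStructure}, $u(\ft \oplus \fh) \cong \prod_{i=1}^p u(\fh)$ as associative algebras, so iterating shows that $\fg$ and $\Nscr(\fg)$ share the same representation type. If $r > 0$, then the decomposition $\fg = \Nscr(\fg) \oplus \ft^r$ would exhibit $\Nscr(\fg) \subsetneq \fg$ as a wild proper direct summand, contradicting the hypothesis. Hence $r = 0$, and the problem reduces to classifying the minimally wild unipotent abelian restricted Lie algebras.

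Now I would split on characteristic. For $p > 2$, the inequality $p^{\dim \fh} > 4$ holds for any $\fh$ of positive dimension, so the wildness criterion collapses to non-cyclicity alone. The hypothesis then forces every proper nonempty sub-multiset of $\mu$ to be a singleton, which happens exactly when $|\mu| = 2$; this yields (II). For $p = 2$, one additionally needs $\dim \fh \ge 3$ for wildness, so each nonempty proper sub-multiset must be either a singleton or of total dimension at most $2$; the only multi-element sub-multiset of dimension $\le 2$ is $\{1, 1\}$. I would then case-split on whether $\mu$ contains an entry equal to $1$. If not, all entries are $\ge 2$, any two of them sum to $\ge 4$, so $|\mu| \le 2$, producing $\fg \cong \fn_n \oplus \fn_m$ with $n, m \ge 2$ (in particular $n + m \ge 3$). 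If so, then forbidding the proper sub-multisets $\{1, 1, 1\}$ and $\{1, k\}$ with $k \ge 2$ as wild summands pins $\mu$ down either to $\{1, 1, 1\}$ itself (giving $\fg = \fn_1^3$) or to $\{1, m\}$ with $m \ge 2$ (giving $\fg = \fn_1 \oplus \fn_m$ with $1 + m \ge 3$). Together these exhaust (I).

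The main obstacle is not conceptual but organizational: one must enumerate sub-multisets carefully and pay attention to the boundary $p^{\dim \fh} = 4$ in characteristic two, where $\fn_1^2$ is tame rather than wild. This boundary is precisely what allows $\fn_1^3$ to be minimally wild while no $\fn_1^r$ for $r \ge 4$ is (since then $\fn_1^3$ sits inside as a wild proper summand), and it is also what forces the sum condition $n + m \ge 3$ in (I), in contrast to the unrestricted condition in (II).
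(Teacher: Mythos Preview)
Your argument is correct and is precisely the ``straightforward consequence of Seligman's structure theorem'' that the paper alludes to but does not spell out: reduce to the unipotent case via $r=0$, then use Theorem~\ref{RepTypeClassification} to translate minimality into the combinatorial constraint on the multiset $\mu$, with the only subtlety being the tame boundary $\fn_1^2$ in characteristic~$2$. There is nothing to add.
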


\subsection{Noble points and Property PA}
We finish with the statement of our main theorem. First we define `noble points', as a distinguished class of points in the spectrum $\Proj H^*(G, k)$ which depends on the group scheme structure $G$. Thus for a fixed finite augmented algebra $A$, two different group scheme structures $G, G'$ on $A$ may produce in general different sets of noble points in $\Proj H^*(A, k).$
\begin{definition}\label{nobleDef}
    Let $G$ be a finite group scheme. A point $\fp \in \Proj H^*(G, k)$ is \textit{noble for $G$} if there exists a subgroup $C < G$ such that $kC \cong k[t]/t^p$ and the inclusion $\alpha : kC \hookrightarrow kG$ is a $\pi$-point for which $\fp = \fp(\alpha).$
\end{definition}

Note that in Definition \ref{nobleDef}, for the subgroup $C < G$ we have either $C \cong \ZZ / p$ or $C \cong \GG_{a(1)},$ by a theorem of Tate and Oort \cite{OT1970}, which we include as Theorem \ref{TOClassification1}. 
Next we define noble correspondence. A noble correspondence between group scheme structures on a fixed augmented algebra is meant to witness how restriction along a subgroup isomorphic to one of $\GG_{a(1)}$ or $\ZZ / p$ may help determine the isomorphism type of $M\otimes N$ for modules $M, N$ having support $\{\fp\}$ a noble point. 
\begin{definition}
    Suppose $\Delta, \Delta' : A \to A \otimes A$ are two cocommutative comultiplications on a finite augmented algebra $A$, for two group scheme structures $G, G'$. We say that $G$ and $G'$ are in \textit{noble correspondence} if, for each closed point $\fp \in \Proj H^*(A, k)$ which is noble for $G$ and $G'$, that
    \[M \otimes N \cong M \otimes ' N\]
    for each pair of finite modules $M, N$ having support $\{\fp\},$ where $\otimes, \otimes'$ are the two tensor products of modules defined by $\Delta, \Delta'$. 
\end{definition}

Now Property PA demands that a base group scheme structure $\widetilde G$ admits a noble correspondence with any other structure sharing its group algebra as an augmented algebra with fixed character group. 
\begin{definition}
    Let $\widetilde G$ be a finite abelian unipotent group scheme such that every closed point in $\Proj H^*(\widetilde G, k)$ is noble, and let $A = k\widetilde G$ be the group algebra. We say $\widetilde G$ has \textit{Property PA} if, for any cocommutative comultiplication $\Delta : A \to A \otimes A$ for a group scheme $G$, we have that $\widetilde G$ and $G$ are in noble correspondence. 
\end{definition}
\begin{definition}\label{PAdef}
    Let $\widetilde G$ be a finite group scheme such that every closed point in $\Proj H^*(\widetilde G, k)$ is noble, and let $A = k\widetilde G$ be the group algebra. We say $\widetilde G$ has \textit{Property PA} if, for any cocommutative comultiplication $\Delta : A \to A \otimes A$ for a group scheme $G$ with $\chi( G) = \chi(\widetilde G)$, we have $\widetilde G$ and $G$ are in noble correspondence. 
\end{definition}
It follows from definitions that Property PA is unambiguous for finite abelian unipotent group schemes.
\begin{thm}\label{mainThm}
    Let $\widetilde G$ be the infinitesimal group scheme associated to some abelian restricted Lie algebra. 
\begin{itemize}[label={--}]
    \item If $\widetilde G$ is of tame representation type, then $\widetilde G$ has Property PA. 
    
    \item If $\widetilde G$ is of wild representation type, then $\widetilde G$ does not have Property PA. 
    
    \item If $\widetilde G$ is of finite representation type and the principal block $B$ of $k \widetilde G$ has dimension $|B| \le p^3$ then $\widetilde G$ has Property PA.
\end{itemize}
\end{thm}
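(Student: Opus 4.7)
The plan is to handle each of the three cases separately, in each case reducing via Seligman's structure Theorem \ref{SeligmanStructure} to an analysis of the principal block. For an abelian restricted Lie algebra $\fg \cong \ft^r \oplus \Nscr(\fg)$, the algebra $u(\fg) \cong \prod_{i=1}^{p^r} u(\Nscr(\fg))$ splits as a product of copies of the principal block indexed by characters of the torus. Provided the competing comultiplication matches the character group $\chi(\widetilde G) \cong (\ZZ/p)^r$, Property PA can be verified block-by-block, so it suffices to analyze cocommutative Hopf algebra structures on $u(\Nscr(\fg))$ whose character group is trivial. Also, for an infinitesimal $\widetilde G$, each closed point of $\Proj H^*(\widetilde G, k)$ corresponds to a line in $\Nscr_1(\fg)$ by the homeomorphism of Suslin--Friedlander--Bendel \cite{SFB97}, and for any such line $\langle x\rangle$ the inclusion $u(\langle x\rangle) \hookrightarrow u(\fg)$ is a $\pi$-point whose image is a subgroup with group algebra $k[t]/t^p$. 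Hence every closed point is noble for $\widetilde G$, so the hypothesis in Definition \ref{PAdef} is always satisfied in the setting of the theorem.

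For the tame case, Theorem \ref{RepTypeClassification} forces $p = 2$ and $\Nscr(\fg) \cong \fn_1 \oplus \fn_1$, whose restricted enveloping algebra is the Kronecker algebra $k[x,y]/(x^2, y^2)$. I would invoke X.\ Wang's classification \cite{XWang13} of cocommutative Hopf algebra structures on this algebra, and for each representative comultiplication $\Delta'$ from the list, I would compute $M \otimes_{\Delta'} N$ and compare with $M \otimes N$ for every pair of indecomposable modules $M, N$ supported at a common noble point. This is feasible because at each noble $\fp$ there are only finitely many indecomposables of each dimension; the formulas of Ba\v sev and Conlon \cite{Basev61}, \cite{Conlon65} provide the comparison data for the base structure, and the small menu of alternative structures in Wang's classification makes the block-wise calculation tractable.

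For the wild case, Corollary \ref{primordial} reduces matters to the primordial wild summands $\fn_n \oplus \fn_m$ (and also $\fn_1^{\oplus 3}$ when $p = 2$). For each primordial $\fg$, I would exhibit an alternative cocommutative comultiplication $\Delta'$ on the associative algebra $u(\fg)$ that has the same (trivial) character group but a different primitive subspace, realizing a different abelian restricted Lie algebra sitting inside the same ambient algebra. The construction will exploit the freedom in rechoosing generators among the generalized $\pi$-points of a given line in the nullcone. Then I would exhibit a noble point $\fp$ and modules $M, N$ with $\Xscr(A, M) = \Xscr(A, N) = \{\fp\}$ such that $M \otimes N \not\cong M \otimes' N$, detected for instance by comparing dimensions of restrictions along the subgroup witnessing nobility of $\fp$ for one structure but not the other.

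For the finite representation type case with $|B| \le p^3$, the principal block is $k[x]/x^{p^n}$ with $n \le 3$. The cocommutative Hopf algebra structures on this algebra with prescribed character group are highly constrained because the primitive subspace must generate the algebra and fit the $p$-nilpotence data of a cyclic restricted Lie algebra $\fn_n$. I would enumerate these structures by hand (for $n \le 3$) and verify noble correspondence on the finite list of indecomposable modules. The main obstacle is the tame case, where keeping track of tensor products with respect to every comultiplication in Wang's classification for each indecomposable with noble support is the bulk of the technical labor and is carried out in Section \ref{tameSection}; the wild case is conceptually simpler but requires producing a tailored counterexample in each primordial family.
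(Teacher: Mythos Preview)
Your outline is essentially the paper's own proof: reduce to the unipotent nullcone via Proposition~\ref{PAnullcone}, then treat finite, tame, and wild cases by Theorem~\ref{mainThmUnip}, using the classifications \ref{TOClassification1}, \ref{WangClassification2}, \ref{classification3} and the Ba\v sev--Conlon argument for the Kronecker algebra.

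Two points of imprecision are worth flagging. First, your block reduction ``provided the competing comultiplication matches the character group'' hides a structural step: one needs that any $G \in \Gscr_A$ with $\chi(G) = \chi(\widetilde G)$ actually decomposes as $\chi(\widetilde G)^* \times H$ for some $H \in \Gscr_B$ on the principal block. The paper obtains this by Cartier-dualizing and applying Lemma~\ref{artinianGroupScheme} (connected--\'etale splitting for abelian group schemes over a perfect field); without this, ``block-by-block'' is an assertion rather than a reduction. Second, in the wild case the paper does not construct a genuinely different restricted Lie algebra structure: it takes an algebra automorphism $\varphi \in \Omega(A,\fp)$ fixing a noble point and compares $\widetilde G$ with its twist $\widetilde G^{\varphi}$, which is isomorphic to $\widetilde G$ as a Hopf algebra but has a different primitive subspace inside $A$. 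The witness module is the induced module $V = J_1\!\uparrow_{\langle x\rangle}^{\fg}$, and Frobenius reciprocity gives $V \,\widetilde\otimes\, V \cong [\fg:\langle x\rangle]\,V$; the failure of this identity for $\widetilde\otimes^{\varphi}$ is detected by restricting to $\langle x\rangle$ (Lemma~\ref{twistNPALemma}). Your phrase ``comparing dimensions of restrictions along the subgroup witnessing nobility'' gestures at this, but the induced-module/Frobenius-reciprocity mechanism is the actual engine and should be named.
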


We conjecture further that, for any such $\widetilde G$, having Property PA is equivalent to having tame or finite representation type. However, we find that classifying the different group algebra structures for the augmented $k\widetilde G$ becomes too difficult to affirm Property PA, when the principal block has large order. Some simple observations on a certain finite family of nonisomorphic cocommutative Hopf algebra structures on $k[t]/t^{p^n}$ finds noble correspondences consistent with Property PA, but it is not known if this is sufficient. 

\section{Hopf algebra structures}\label{HopfStructures}
For a given finite algebra $A$, it is in general a difficult problem to classify the Hopf algebra structures on $A$ up to isomorphism. We will make use of the work of X. Wang \cite{XWang13}, \cite{XWang15}, which classifies \textit{connected} Hopf algebras of small dimension, and meeting certain constraints relevant for our unipotent abelian group schemes. Dualizing such Hopf algebras gives local algebras, and in particular all Hopf algebra structures on any given local algebra of dimension $p^2$ over an algebraically closed field of characteristic $p$ are classified. Our main Theorem \ref{mainThm} only affirms Property PA in the case where the principal block of an algebra is of dimension $\le p^3$ because Property PA is quantified over all possible group scheme structures on a given algebra, and as such we limit ourselves to the scope of X. Wang's classification. 
Negating Property PA for a given algebra requires only constructing one pair of group scheme structures not in noble correspondence, and may therefore be proven independent of a classification of all group scheme structures. 

Let's assume throughout that $A$ has a fixed augmentation $\varepsilon : A \to k$. There is a certain subvariety $\Gscr_A$ of points \[(\Delta, S) \in \Hom_k(A, A \otimes A) \times \Hom_k(A, A)\] such that $\Delta$ is a coassociative, cocommutative map of algebras with antipode $S$ and counit $\varepsilon$. We call $\Gscr_A$ the variety of \textit{group scheme structures} on $A$. By Tannakian duality, $\Gscr_A$ is equivalent to the variety of tensor category structures on $\mod A$ with a fixed monoidal unit $k$, such that the forgetful functor to vector spaces is a tensor functor (see e.g. Etingof, Gelaki, Nikshych and Ostrik \cite{EGNO15}). This motivates our study of how the Green ring $\fR(G)$ changes for group scheme structures $G \in \Gscr_A$ on a fixed $A$.

For an augmented algebra $A$, we denote by $\Aut(A)$ the group of augmented automorphisms. The group $\Aut(A)$ acts on the variety $\Gscr_A,$ and hence on the set of Green ring structures on the free abelian group generated by finite indecomposable modules. The action of $\varphi \in \Aut(A)$ takes $G = (\Delta, S) \in \Gscr_A$ to $G^{\varphi} = (\Delta^{\varphi}, S^{\varphi})$, where 
\[\Delta^{\varphi} = (\varphi\otimes \varphi) \circ \Delta \circ \varphi^{-1},\qquad S^{\varphi} = \varphi \circ S \circ \varphi^{-1}. \]
When $\otimes$ is the tensor product for $G$, defined by pulling back along $\Delta$, we denote also $\otimes^{\varphi}$ the tensor product for $G^{\varphi}$. If $M$ is a (left) module over $A$ and $\varphi \in \Aut(A),$ we denote by $M^{\varphi}$ the base change $\varphi_!(M)$, so that if $m : A \otimes M \to M$ is the module structure on $M$, we have $m^{\varphi} : A \otimes M \to M$ is given by \[m^{\varphi} = m \circ (\varphi^{-1} \otimes \id_M).\]
Now we have the natural identity 
\begin{equation}\label{phiDist}(M \otimes N )^{\varphi} = M^{\varphi} \otimes^{\varphi} N^{\varphi},
\end{equation}
for any modules $M, N$, for any $G$ with product $\otimes$, and any $\varphi$. 

\begin{note*}
     Our Definition \ref{PAdef} of Property PA is ranging over comultiplications $A \to A \otimes A$ `coming from' a group scheme $G$. This may be rephrased by ranging over points $G$ in our variety $\Gscr_A$ of group scheme structures on $A$. 
\end{note*}
\subsection{Unipotent abelian group schemes of order $p^n$}\label{UniAbSection}
Let $G$ be a unipotent group scheme of order $p^n,$ i.e. a finite group scheme with group algebra $A = kG$, a local algebra of dimension $p^n$. Then the dual Hopf algebra $A^*$ is a commutative algebra, and is \textit{connected}, i.e. has a coradical of dimension $1$. If $n \le 2$, then $A^*$ is necessarily isomorphic to one of the connected Hopf algebras in X. Wang's classification \cite{XWang13}. 
In fact, there are finitely many isomorphism classes given, and a complete classification of local Hopf algebras of dimension $p^2$ are given as an explicit corollary in \cite{XWang13}, meaning $A$ is isomorphic to one appearing in the paper as well. 
If $n = 3$, $A^*$ is necessarily isomorphic to one of the connected Hopf algebras given by the classification Nguyen, L. Wang, and X. Wang \cite{NWW15}, or the unofficial sequel \cite{XWang15} of X. Wang classifying those of large abelian primitive subspaces. This classification involves an infinite parameterized family of nonisomorphic Hopf algebras. 
Nevertheless we can still identify our variety $\Gscr_A$ for a fixed augmented algebra $A$ as isomorphic to some closed fiber within the spaces of connected Hopf algebras relevant to \cite{NWW15}, \cite{XWang13}, \cite{XWang15}. 
For each of the group schemes $\widetilde G$ that we affirm Property PA for, we find there are finitely many orbits in $\Gscr_A$ (where $A = k\widetilde G$ is the augmented algebra). Such a result may be open for larger groups of finite representation type, although preliminary computations affirm that a natural interpolation of the classifications listed below seems to hold in general.

\begin{definition}
    Let $A$ be a commutative, cocommutative Hopf algebra, with $G = \Spec A^*$ the finite abelian group scheme with $A = kG$ and $A^* = \cO(G).$ Then we denote $G^* = \Spec A$ the \textit{Cartier dual} of $G$, for which $kG^* = A^*$ and $\cO(G^*) = A.$
\end{definition}

\begin{definition}\label{WittVectors}
    Let $\fg = \fn_n$ be the cyclic Lie algebra of dimension $n$, as in \ref{SeligmanStructure}. The infinitesimal group scheme $\widetilde G$ associated to $\fg$ is denoted $\WW_{n(1)}.$ This notation comes from $\WW_{n(1)}$ being isomorphic to the first Frobenius kernel of $\WW_n,$ the additive group of \textit{length $n$ Witt vectors}. See e.g. Demazure and Gabriel \cite[\!\!\!~V]{DemGab70} for background.
    The Cartier dual $\WW_{n(1)}^*$ is isomorphic to $\WW_{1(n)} \cong \GG_{a(n)},$ the $n^{th}$ Frobenius kernel of $\WW_1 \cong \GG_a,$ the general additive group scheme.  
\end{definition}
The formulas in the following classifications will make frequent use of an expression $\omega(t) \in A\otimes A,$ defined whenever $t \in A$, with $A$ an algebra over $k$ of characteristic $p$. We denote
\[\omega(t) = \frac{(t\otimes 1 + 1 \otimes t)^p -(t^p \otimes 1 + 1 \otimes t^p)}{p} = \sum_{i = 1}^{p-1} \frac{(p-1)!}{i!(p-i)!}t^i\otimes t^{p-i} .\]

\begin{thm}\label{TOClassification1}(Tate and Oort, 1970 \cite{OT1970}) Let $\fg =\fn_1$, the $p$-nilpotent cyclic Lie algebra, and let $A = u(\fg) = k[x]/x^p.$ Then a complete set of representatives for the orbit space $\Gscr_A / \Aut(A)$ is given as follows:
\begin{enumerate}\setcounter{enumi}{-1}
\item $\widetilde G = \GG_{a(1)},$ with
\[\widetilde \Delta : x \mapsto x \otimes 1 + 1 \otimes x,\]
\item $G_1 = \ZZ / p,$ with
\[\Delta_1 : x \mapsto x \otimes 1 + 1 \otimes x + x\otimes x.\]
\end{enumerate}
\end{thm}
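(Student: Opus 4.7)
The plan is to identify $\Gscr_A / \Aut(A)$ with the set of isomorphism classes of cocommutative Hopf algebras whose underlying augmented algebra is $A = k[x]/x^p$, and then invoke the Oort--Tate classification of finite group schemes of order $p$. Two points of $\Gscr_A$ lie in the same $\Aut(A)$-orbit exactly when the two resulting Hopf algebras admit an isomorphism whose underlying linear map is an augmented algebra automorphism of $A$; conversely, every Hopf algebra isomorphism between two such structures on $A$ is automatically an augmented algebra automorphism, since both structures share the same underlying $A$ and $\varepsilon$. So orbits correspond bijectively to isomorphism classes of finite commutative group schemes $G$ of order $p$ with $kG \cong A$ as augmented algebras. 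By Oort--Tate \cite{OT1970}, every such $G$ is isomorphic to exactly one of $\GG_{a(1)}$, $\mu_p$, or $\ZZ/p$.

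Next I would sort through the three candidates and verify the stated formulas. One computes $k\GG_{a(1)} = u(\fn_1) \cong k[x]/x^p$, which is local and realizes $\widetilde\Delta$ via its primitive generator; and $k(\ZZ/p) = k[t]/(t^p - 1) \cong k[x]/x^p$ via $x = t - 1$, where expanding $\Delta(t) = t\otimes t$ recovers exactly $\Delta_1(x) = x \otimes 1 + 1 \otimes x + x \otimes x$. The remaining candidate $k\mu_p \cong \cO(\ZZ/p)^* \cong k^{\ZZ/p}$ is a separable algebra of dimension $p$, hence not local and not isomorphic to $A$. This eliminates $\mu_p$ and leaves exactly the two orbits asserted. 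That $\widetilde G$ and $G_1$ lie in distinct orbits is immediate from the fact that the character group is a Hopf invariant: $\chi(\GG_{a(1)}) = 0$ while $\chi(\ZZ/p) = \ZZ/p$.

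The substantive content here is the Oort--Tate classification itself. A self-contained direct approach would write $\Delta(x) = x \otimes 1 + 1 \otimes x + c$ with $c \in \fm \otimes \fm$ (forced by the counit and primitivity at the linear level), impose cocommutativity, the algebra condition $\Delta(x)^p = 0$ (which in characteristic $p$ reduces to $c^p = 0$ in $A \otimes A$, since $x \otimes 1$, $1 \otimes x$, and $c$ mutually commute), and the coassociativity cocycle identity $(\Delta \otimes \id)c - (\id \otimes \Delta)c = 1 \otimes c - c \otimes 1$; one would then use the $\Aut(A)$-action $x \mapsto a_1 x + a_2 x^2 + \cdots$ (with $a_1 \ne 0$) to normalize $c$ order by order in the PBW filtration. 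The hard step would be showing that after normalization the only possibilities are $c = 0$ and $c = x \otimes x$, which amounts to reproving the order-$p$ case of Oort--Tate by hand.
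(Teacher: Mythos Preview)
The paper does not supply a proof of this theorem; it is stated with attribution to Tate and Oort and used as input for the rest of the argument. Your derivation is the natural way to extract the statement from their classification and is correct: the bijection between $\Gscr_A/\Aut(A)$ and Hopf-isomorphism classes of cocommutative Hopf structures on the fixed augmented algebra $A$ works exactly as you describe (any Hopf isomorphism between two such structures is in particular an algebra automorphism preserving $\varepsilon$, hence lies in $\Aut(A)$), and over algebraically closed $k$ of characteristic $p$ the Oort--Tate list is $\GG_{a(1)}$, $\mu_p$, $\ZZ/p$, with $\mu_p$ eliminated because its group algebra is not local. The verification of the two comultiplication formulas and the separation of the two orbits via $\chi$ are both fine.

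One small slip to fix: in the elimination step you write $k\mu_p \cong \cO(\ZZ/p)^*$, but $\cO(\ZZ/p)^* = k(\ZZ/p)$ is the \emph{local} algebra $k[t]/(t^p-1)$. What you want is $k\mu_p \cong \cO(\ZZ/p) \cong k^{\ZZ/p}$ (no dual), which follows from Cartier duality $\mu_p \cong (\ZZ/p)^*$ together with the identity $kG^* = \cO(G)$ from the paper's conventions. Your conclusion that $k\mu_p$ is a product of $p$ copies of $k$, hence not isomorphic to $A$, then stands.
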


\begin{thm}\label{WangClassification2}(X. Wang, 2013 \cite{XWang13})
    Let $\fg$ be a 2-dimensional unipotent abelian restricted Lie algebra and let $A = u(\fg).$

    If $\fg = \fn_2,$ and $A = k[x]/(x^{p^2})$, then a complete set of representatives for the orbit space $\Gscr_A / \Aut(A)$ is given as follows:
    \begin{enumerate}\setcounter{enumi}{-1}
        \item $\widetilde G = \WW_{2(1)}$, with \[\widetilde \Delta : x \mapsto  x\otimes 1 + 1 \otimes x,\]
        \item $G_1$, the Cartier dual of a certain subgroup of $\WW_{2}$, with \[\Delta_1 : x \mapsto x \otimes 1 + 1 \otimes x + \omega(x^p),\]
        \item $G_2 = \ZZ / (p^2),$ with\[ \Delta_2 : x \mapsto x \otimes 1 + 1 \otimes x + x \otimes x.\]
    \end{enumerate}
    
    If $\fg = \fn_1^2,$ and $A= k[x, y]/(x^p, y^p)$, then a complete set of representatives for the orbit space $\Gscr_A / \Aut(A)$ is given as follows:
    \begin{enumerate}\setcounter{enumi}{-1}
        \item $\widetilde G = \GG_{a(1)}^2$, with \begin{flalign*}
            \widetilde \Delta : x &\mapsto x \otimes 1 + 1 \otimes x\\
            y&\mapsto y \otimes 1 + 1 \otimes y,
        \end{flalign*}
        \item $G_1 = \GG_{a(2)}$, with \begin{flalign*}
             \Delta_1 : x &\mapsto x \otimes 1 + 1 \otimes x\\
            y&\mapsto y \otimes 1 + 1 \otimes y + \omega(x)
        \end{flalign*}
        \item $G_2 = \GG_{a(1)}\times \ZZ / p,$ with \begin{flalign*}
        \Delta_2 : x &\mapsto x \otimes 1 + 1 \otimes x\\
            y&\mapsto y \otimes 1 + 1 \otimes y + y \otimes y
        \end{flalign*}
        \item $G_3 = (\ZZ / p)^2$, with \begin{flalign*}
            \Delta_3 : x &\mapsto x \otimes 1 + 1 \otimes x + x \otimes x\\
            y&\mapsto y \otimes 1 + 1 \otimes y + y \otimes y.
        \end{flalign*}
    \end{enumerate}
\end{thm}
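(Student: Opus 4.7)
The plan is to combine Cartier duality with a direct normal-form analysis of the comultiplication on the fixed associative algebra $A$. Since $A$ is a local augmented finite-dimensional algebra, a cocommutative Hopf structure on $A$ corresponds, via Cartier duality, to a commutative connected Hopf structure on the fixed coalgebra $A^*$. This translates the problem into classifying, up to isomorphism, commutative connected group schemes of order $p^2$ whose coordinate algebra is $A$ as a $k$-algebra; the ambient freedom $\Aut(A)$ corresponds on the dual side to the choice of such an isomorphism $\cO(G^*) \cong A$.

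For the case $A = k[x]/(x^{p^2})$, the counit axiom forces $\Delta(x) = x\otimes 1 + 1\otimes x + \tau$ with $\tau \in (x)\otimes (x)$. I would expand $\tau = \sum_{i,j \ge 1} c_{ij}\, x^i \otimes x^j$, impose cocommutativity ($c_{ij}=c_{ji}$), coassociativity, and the algebra-map condition $\Delta(x)^{p^2}=0$, then normalize using augmented automorphisms $x \mapsto a_1 x + a_2 x^2 + \cdots$ with $a_1 \in k^\times$. The trichotomy arises from whether (i) $\tau$ can be eliminated entirely, yielding $\widetilde \Delta$ with $\widetilde G = \WW_{2(1)}$; (ii) a two-step extension structure persists, detected by $x^p$ becoming non-primitive with the canonical Witt cocycle $\omega(x^p)$ as its primitive defect, yielding $\Delta_1$; or (iii) some $1 + f(x)$ can be made grouplike, yielding $\Delta_2$ with $G_2 = \ZZ/p^2$ (Cartier dual of $\mu_{p^2}$). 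The unavoidable appearance of $\omega$ reflects the fact that $\omega$ represents, up to coboundary, the unique nontrivial class in the symmetric Hopf $2$-cohomology classifying the extension $\GG_{a(1)} \hookrightarrow \WW_{2(1)} \twoheadrightarrow \GG_{a(1)}$.

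For $A = k[x, y]/(x^p, y^p)$, the strategy is parallel but with more freedom: $\Aut(A)$ now contains a $GL_2$-action on the two-dimensional quotient $(x,y)/(x,y)^2$, and both $\Delta(x)$ and $\Delta(y)$ must be normalized simultaneously. I would classify according to the pair $\bigl(\dim P(A,\Delta),\ \#\text{independent grouplikes in } 1 + (x,y)\bigr)$, which by order considerations must equal one of $(2,0), (1,0), (1,1), (0,2)$. These correspond respectively to $\GG_{a(1)}^{2}$ with both generators primitive; to $\GG_{a(2)}$ realized as a Witt extension with $\omega(x)$ appearing in $\Delta(y)$; to $\GG_{a(1)} \times \ZZ/p$ with one grouplike direction; and to $(\ZZ/p)^2$ with both grouplike. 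After an appropriate choice of generators via $\Aut(A)$, each case yields exactly one of the four comultiplications stated.

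The main obstacle is the combined task of \emph{completeness} (every cocommutative Hopf structure on $A$ is $\Aut(A)$-equivalent to one of the listed forms) and \emph{non-redundancy} (no two listed forms are $\Aut(A)$-equivalent). Completeness requires a careful inductive analysis of symmetric Hopf $2$-cohomology along the coradical filtration, showing that $\omega$ and the grouplike deformation $x \otimes x$ exhaust the nontrivial cocycles modulo the very large space of coboundaries produced by $\Aut(A)$. Non-redundancy is verified by Cartier-dual invariants: the dimension of the primitive subspace, the character group $\chi(G)$, and the $F$-semilinear action on $P$ together separate all four (respectively three) orbits. Both tasks are carried out in X.\ Wang \cite{XWang13}, whose output is precisely the classification invoked here.
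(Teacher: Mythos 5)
Your proposal is correct and takes essentially the paper's own route: the paper gives no independent proof of this theorem, citing X.\ Wang \cite{XWang13} directly, and its proof of the dimension-$p^3$ analogue (Theorem \ref{classification3}) follows exactly the Cartier-dual template you describe --- fix the connected coalgebra $A^*$, match the compatible commutative Hopf structures against Wang's classification to get completeness and non-redundancy, and verify the listed comultiplications lie in distinct orbits. One harmless slip worth noting: in the $\fn_2$ case it is $x$, not $x^p$, that fails to be primitive under $\Delta_1$ --- indeed $\Delta_1(x)^p = x^p \otimes 1 + 1 \otimes x^p + \omega(x^p)^p = x^p \otimes 1 + 1 \otimes x^p$ since $x^{p^2} = 0$ --- so the distinguishing invariant there is $\dim P(A, \Delta_1) = 1$, spanned by $x^p$.
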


The following theorem is essentially a dualized version of the first theorem of Nguyen, L. Wang, and X. Wang \cite[Theorem~1.1]{NWW15}, which classifies all connected Hopf algebras of dimension $p^3$ with a one-dimensional primitive subspace. By fixing a local commutative algebra of dimension $p^3,$ we merely pick out which of the isomorphism classes listed in \cite{NWW15} have the relevant coalgebra structure, with some compatible commutative algebra structure. We find that there are only finitely many, up to isomorphism.

\begin{thm}\label{classification3}
    Let $\fg = \fn_3$ and $A = u(\fg) = k[x]/x^{p^3}.$ Then a complete set of representatives for the orbit space $\Gscr_A / \Aut(A)$ is given as follows:
    \begin{enumerate}\setcounter{enumi}{-1}
        \item $\widetilde G = \WW_{3(1)},$ with
        \[\widetilde \Delta : x \mapsto x \otimes 1 + 1 \otimes x,\]
        \item $G_1$, with
        \[ \Delta_1 : x \mapsto x \otimes 1 + 1 \otimes x + \omega(x^{p^2}),\]
        \item $G_2$, with
        \[\Delta_2 : x \mapsto x \otimes 1 + 1 \otimes x + \omega(x^p),\]
        \item $G_3 = \ZZ / (p^3)$, with
        \[\Delta_3 : x \mapsto x \otimes 1 +  1 \otimes x + x \otimes x. \]
    \end{enumerate}
\end{thm}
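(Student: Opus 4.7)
The plan is to classify cocommutative Hopf algebra structures on the fixed augmented algebra $A = k[x]/x^{p^3}$ by reducing to the Nguyen--L.~Wang--X.~Wang classification \cite{NWW15} via Cartier duality. A point of $\Gscr_A / \Aut(A)$ is by definition an isomorphism class of cocommutative Hopf algebra whose underlying augmented algebra is $A$; passing to linear duals yields an isomorphism class of commutative Hopf algebra $H = A^*$ of dimension $p^3$ whose underlying coalgebra is the fixed dual of the algebra $A$. Under this duality the primitive subspace $P(H)$ is canonically isomorphic to $(\fm_A / \fm_A^2)^*$, which is one-dimensional since $A = k[x]/x^{p^3}$ is monogenic. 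Hence $\dim P(H) = 1$ automatically, placing the classification within the scope of NWW's theorem.

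Having made this reduction, I would invoke Theorem~1.1 of \cite{NWW15} and sift out those entries whose multiplication is commutative, since we require $A$ to be cocommutative. I would then match each surviving class with one of $\widetilde\Delta, \Delta_1, \Delta_2, \Delta_3$ by computing its Cartier dual. The étale class $\ZZ/p^3$ is recognized by possessing a grouplike generator $1 + x$; the other three classes arise from infinitesimal group schemes and are differentiated by the smallest $k$ for which $x^{p^k}$ is primitive.

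Independently I would verify that each listed $\Delta_i$ does extend to a valid cocommutative Hopf structure on $A$, i.e.\ that $\Delta_i(x)^{p^3} = 0$ in $A \otimes A$. For $\widetilde\Delta$ and $\Delta_3$ this is immediate from the characteristic-$p$ identity $(a+b)^{p^n} = a^{p^n} + b^{p^n}$ for commuting $a, b$, together with the grouplike identity $\Delta_3(1+x) = (1+x) \otimes (1+x)$. For $\Delta_1$ and $\Delta_2$ the key input is the commutative-algebra identity $\omega(t)^p = \omega(t^p)$; iterating this yields
\[\Delta_2(x)^p = x^p \otimes 1 + 1 \otimes x^p + \omega(x^{p^2}),\]
\[\Delta_2(x)^{p^2} = x^{p^2} \otimes 1 + 1 \otimes x^{p^2},\]
and hence $\Delta_2(x)^{p^3} = 0$; the check for $\Delta_1$ is the same argument one stage shorter.

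To show the four structures are pairwise non-conjugate under $\Aut(A)$, I would compute the primitive-subspace dimensions $\dim P(A, \Delta_i)$, which are isomorphism invariants. Using $\omega(x^{p^2})^2 = 0$ in $A \otimes A$ and matching bidegrees of monomials, a direct calculation produces $\dim P$ equal to $3, 2, 1, 0$ respectively, which is enough to distinguish the four classes. The main obstacle I anticipate is the second step: working through NWW's list and confirming that precisely the four commutative classes above survive, without inadvertently merging or missing any. Once that is done, the Witt-vector arithmetic and the primitive-dimension calculation are routine.
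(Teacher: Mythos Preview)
Your proposal is correct and follows essentially the same route as the paper: dualize to a connected commutative Hopf algebra of dimension $p^3$, observe that the primitive subspace is forced to be one-dimensional because $A$ is local with $\fm_A/\fm_A^2$ one-dimensional, and then appeal to \cite[Theorem~1.1]{NWW15} to conclude there are exactly four commutative isomorphism classes. The paper leaves the verification that the four displayed $\Delta_i$ are valid and pairwise non-isomorphic as an exercise; your use of the invariant $\dim P(A,\Delta_i) = 3,2,1,0$ together with the identity $\omega(t)^p = \omega(t^p)$ is a clean way to carry out that exercise, and is slightly more explicit than what the paper records.
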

\begin{proof}
    The isomorphism classes of Hopf algebras in $\Gscr_A$ are each dual to some commutative Hopf algebra of dimension $p^3$ with a coalgebra isomorphic to $A^* = \cO(\WW_{3(1)})$, a connected coalgebra. We'll call this algebra structure canonical, and explicitly, we have
    \[A^* = k[dx, dy, dz] / (dx^p, dy^p, dz^p),\]
    where $dx, dy, dz$ are the dual vectors to $x, y = x^p, z = x^{p^2}$ respectively. We have fixed the comultiplication induced from generators via the canonical algebra structure described as
    \begin{flalign*}
        dx &\mapsto dx \otimes 1+ 1 \otimes dx\\
        dy &\mapsto dy \otimes 1 + 1 \otimes dy + \omega(dx)\\
        dz &\mapsto dz \otimes 1 + 1 \otimes dz + \omega(dy) + \omega(dx)[dy \otimes 1 + 1 \otimes dy]^{p-1}, 
    \end{flalign*}
    with all multiplications and powers also being defined canonically.
    Now, we have the primitive subspace $P(A^*) =  \langle dx \rangle$ is fixed since the coalgebra and unit for $A^*$ (the coaugmented coalgebra structure) is fixed. Thus any group scheme in $\Gscr_A$ has coordinate algebra isomorphic to one classified in \cite[Theorem~1.1]{NWW15}. By that theorem, there are precisely four distinct isomorphism classes of connected commutative Hopf algebras with a one-dimensional primitive subspace. Therefore, it suffices to verify that the four cocommutative expressions given in the theorem statement define Hopf algebra structures in $\Gscr_A$ and are not isomorphic to one another. We leave this as an exercise for the reader.  

\end{proof}
Also note, by the same classification \cite[Theorem~1.1]{NWW15}, we have that $k[x]/x^{p^3}$ admits an infinite family of nonisomorphic Hopf algebra structures which are not cocommutative when $p > 2$. For large $n$ it is clear that there exists local algebras over $k$, of dimension $p^n$, which admit infinitely many nonisomorphic compatible Hopf algebra structures. But it remains unclear whether there are always finitely many compatible cocommutative Hopf algebra structures up to isomorphism, on a given local algebra. 
\subsection{Abelian restricted enveloping algebras}
Now we extend the classifications from Section \ref{UniAbSection} to a result that applies to any abelian restricted Lie algebra of tame representation type, as well as any abelian restricted Lie algebra $\fg$, of finite representation type with principal block $B \subset u(\fg)$, having dimension $|B| \le p^3$. These are all of the cases for which Property PA is affirmed in Theorem \ref{mainThm}.
\begin{lemma}\label{artinianGroupScheme}
    Let $A$ be a local artinian commutative algebra over a perfect field $F$ and let $R$ denote the $n$-fold direct product of $A$ with itself. If $\Spec R$ has the structure of an abelian group scheme $G$ over $F$, then the connected component $G_0$ of the identity is isomorphic to $\Spec A$ as a scheme, and we have $G \cong G_0 \times H$ for some finite abelian group $H$ order $n$.
\end{lemma}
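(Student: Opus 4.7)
The plan is to decompose $\Spec R$ into its connected components, identify $G_0$ with one of them, and then invoke the splitting of the connected-\'etale sequence over the perfect field $F$.

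First, since $A$ is local artinian, its maximal ideal is nilpotent, so $\Spec A$ is topologically a single point and hence connected. The $n$-fold product $R = A^n$ therefore has exactly $n$ connected components, each scheme-theoretically isomorphic to $\Spec A$. The component $G_0$ containing the identity is thereby isomorphic to $\Spec A$ as a scheme, and is a subgroup scheme of $G$ because it is an open-and-closed neighborhood of the identity in the group scheme $G$.

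Next, I would analyze the residue fields appearing in $R$. The maximal ideals of $R$ are the $n$ ideals $\mathfrak{m}_i$ obtained by placing the maximal ideal of $A$ in the $i$-th factor, and each residue field $R/\mathfrak{m}_i$ equals the residue field $K$ of $A$. The identity $e \colon \Spec F \to G$ corresponds to an $F$-algebra homomorphism $R \to F$, which must factor through some $K \to F$; since $F \hookrightarrow K$ is a field extension and $K \to F$ is $F$-linear and surjective, we conclude $K = F$. Consequently every closed point of $\Spec R$ has residue field $F$, and the \'etale quotient $\pi_0(G) := G/G_0 \cong \Spec(F^n)$ is a constant \'etale group scheme corresponding, via the abelian structure inherited from $G$, to an abelian group $H$ of order $n$.

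Finally, I would invoke the standard splitting of the connected-\'etale sequence
\[
1 \to G_0 \to G \to \pi_0(G) \to 1,
\]
which holds over any perfect field; see Waterhouse \cite{Wa79}, \S 6.8. The resulting section $\pi_0(G) \to G$ is a homomorphism of group schemes, and since $G$ is abelian the induced map $G_0 \times \pi_0(G) \to G$ is an isomorphism of group schemes, giving $G \cong G_0 \times H$ as desired. The one subtle point is the existence of the section itself; this is where perfectness of $F$ is used in an essential way, via the fact that the reduced closed subscheme $G_{\mathrm{red}}$ is then a subgroup scheme of $G$ mapping isomorphically onto $\pi_0(G)$.
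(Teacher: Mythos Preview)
Your proof is correct and follows essentially the same route as the paper: identify $G_0$ with one copy of $\Spec A$, observe that $G_{\mathrm{red}}$ is a subgroup scheme over the perfect field $F$ and supplies the section of the connected--\'etale sequence, and then use abelianness to turn the resulting semidirect product into a direct product. The paper's argument is terser---it simply asserts $A_{\mathrm{red}}\cong F$ and identifies $G_{\mathrm{red}}$ with the constant group scheme $H$ directly---whereas you justify the residue-field statement via the identity section; but the underlying mechanism is identical.
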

\begin{proof}
    Since $F$ is perfect, we have automatically that $G_{\mathrm{red}}$ and $G_0$ are both closed subgroups of $G,$ with $G_0 \cong \Spec A.$ We have $G_{\mathrm{red}} = \Spec R_{\mathrm{red}}$ is the $n$-fold disjoint union of $\Spec F$ with itself, since $A_{\mathrm{red}} \cong F$. Then the underlying topological space for the scheme $G_{\mathrm{red}}$ is actually a discrete topological group induced by the scheme theoretic structure maps, and we have $H = G_{\mathrm{red}}$ is a constant group scheme of order $n$. 
    Since $H$ is also the group of connected components, we have now a split short exact sequence of abelian group schemes
    \[G_0 \hookrightarrow G \twoheadrightarrow H\] and therefore $G \cong G_0 \times H.$
\end{proof}

Now suppose $\fg$ is an abelian restricted Lie algebra. By Theorem \ref{SeligmanStructure}, we have $\fg \cong \ft^r \oplus \Nscr(\fg)$ for some $r \ge 0,$ and therefore there is an isomorphism of associative algebras $u(\fg) \cong \prod_{i=1}^{p^r}u(\Nscr(\fg))$. Since $u(\Nscr(\fg))$ is a finite local commutative algebra over an algebraically closed field $k$, we can apply Lemma \ref{artinianGroupScheme}. For $A = u(\fg)$ and $B = u(\Nscr(\fg))$ the principal block, we therefore have that a complete set of representatives for the orbit space $\Gscr_A / \Aut(A)$ can be classified by their Cartier duals as $\Pscr_r\times (\Gscr_B / \Aut(B)),$ where $\Pscr_r$ is the set of partitions of $r$, i.e. the set of isomorphism classes of abelian groups of order $p^r$. As a corollary, whenever $\Nscr(\fg)$ is a unipotent algebra of the type covered by Theorems \ref{TOClassification1}, \ref{WangClassification2}, \ref{classification3}, we can give a complete set of representatives of $\Gscr_A / \Aut(A)$ explicitly. Instead we'll present what we need out of this classification in the following proposition. 
\begin{prop}\label{PAnullcone}
    Let $\fg$ be an abelian restricted Lie algebra with infinitesemimal group scheme $\widetilde G.$ Let $\fh = \Nscr(\fg)$, with infinitesimal group scheme $\widetilde H$. Then $\widetilde H$ has Property PA if and only if $\widetilde G$ has Property PA.
\end{prop}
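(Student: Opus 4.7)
The plan is to reduce Property PA for $\widetilde G$ directly to Property PA for $\widetilde H$ by combining the Cartier-dual decomposition of $\Gscr_{u(\fg)}$ afforded by Seligman's theorem and Lemma~\ref{artinianGroupScheme} with a K\"unneth-type splitting of $\Rep \widetilde G$ produced by the toral part $\ft^r$.

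First, Seligman gives $\fg \cong \ft^r \oplus \fh$, and hence $u(\fg) \cong u(\ft^r) \otimes u(\fh) \cong \prod_{i=1}^{p^r} u(\fh)$ as algebras, since $u(\ft^r)$ is semisimple commutative. Applied to the Cartier dual $G^\ast = \Spec u(\fg)$, Lemma~\ref{artinianGroupScheme} shows that any $G \in \Gscr_{u(\fg)}$ decomposes up to isomorphism as $G \cong G_0 \times H^\ast$, where $G_0$ is a group scheme structure on $u(\fh)$ and $H$ is a finite abelian group of order $p^r$. Because $kG_0 = u(\fh)$ is local, $\chi(G_0) = 0$ and so $\chi(G) \cong H$; requiring $\chi(G) = \chi(\widetilde G) \cong (\ZZ/p)^r$ forces $H^\ast \cong \bfmu_p^r$. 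This yields a bijection between the relevant orbits of $\Gscr_{u(\fg)}$ and those of $\Gscr_{u(\fh)}$, sending $\widetilde G$ to $\widetilde H$. Moreover $\Nscr_1(\fg) = \Nscr_1(\fh)$, so $\Xscr(u(\fg)) = \Xscr(u(\fh))$ canonically.

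Next I would identify the noble points. Any order-$p$ subgroup $C$ of $G = G_0 \times \bfmu_p^r$ is either $\GG_{a(1)}$ or $\ZZ/p$ by Tate-Oort, Theorem~\ref{TOClassification1}. The factor $G_0$ is connected unipotent (because $kG_0 = u(\fh)$ is local), $\bfmu_p^r$ is connected diagonalizable, any $\Hom$ between an \'etale and a connected infinitesimal finite group scheme vanishes, and $\Hom(\GG_{a(1)}, \bfmu_p^r) = 0$ by Cartier duality. Consequently every such $C$ factors through $G_0$, so the noble points of $G$ inside $\Xscr(u(\fg)) = \Xscr(u(\fh))$ coincide with the noble points of $G_0$; the same holds for $\widetilde G$ and $\widetilde H$. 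In particular, every closed point of $\Proj H^\ast(\widetilde G, k)$ is noble exactly when the corresponding statement holds for $\widetilde H$.

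Finally I would compare the two tensor products. Semisimplicity of $u(\ft^r)$ splits every $u(\fg)$-module as $V = \bigoplus_\chi V_\chi$ indexed by $\chi \in (\ZZ/p)^r$, each $V_\chi$ a $u(\fh)$-module, and a formal K\"unneth argument applied to $G = G_0 \times \bfmu_p^r$ gives
\[ (V \otimes_G W)_\chi \;\cong\; \bigoplus_{\chi_1 + \chi_2 = \chi} V_{\chi_1} \otimes_{G_0} W_{\chi_2}, \]
and identically for $\widetilde G, \widetilde H$. Since $\supp_{u(\fg)}(V) = \bigcup_\chi \supp_{u(\fh)}(V_\chi)$, modules $V, W$ with $u(\fg)$-support $\{\fp\}$ have each $V_{\chi_1}, W_{\chi_2}$ either projective over $u(\fh)$ or of support $\{\fp\}$. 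Projective pieces tensor to free $u(\fh)$-modules of the same rank under either product, while nonprojective pieces match by the hypothesized noble correspondence between $\widetilde H$ and $G_0$. Reassembling the $\chi$-pieces gives $V \otimes_{\widetilde G} W \cong V \otimes_G W$; the converse follows by viewing $u(\fh)$-modules as $u(\fg)$-modules concentrated at $\chi = 0$. The hardest step to make airtight is the subgroup identification in the previous paragraph---ruling out stray order-$p$ embeddings that couple $G_0$ with the diagonalizable factor $\bfmu_p^r$---while the K\"unneth splitting and the projective case are routine.
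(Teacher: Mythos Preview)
Your proof is correct and follows essentially the same architecture as the paper's: decompose $G$ via Lemma~\ref{artinianGroupScheme}, identify the noble points of $G$ with those of the $u(\fh)$-factor, and compare the two tensor products term-by-term using the character decomposition coming from the semisimple factor $u(\ft^r)$. Two small remarks. First, the parenthetical claim that ``$G_0$ is connected unipotent (because $kG_0 = u(\fh)$ is local)'' is false as stated: locality of the group algebra gives unipotence, not connectedness (e.g.\ $k[\ZZ/p]\cong k[t]/t^p$ is local but $\ZZ/p$ is \'etale). Fortunately your argument never uses connectedness of $G_0$; you only use that the projection of $C$ to $\bfmu_p^r$ vanishes. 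Second, the paper handles this step more cheaply: rather than invoking Tate--Oort and computing $\Hom$'s case by case, it simply observes that any $\pi$-point factors through a unipotent abelian subgroup, and the unique maximal unipotent subgroup of $G_0\times\bfmu_p^r$ is $G_0$ (since $\bfmu_p^r$ is diagonalizable). So the step you flagged as ``hardest to make airtight'' is in fact a one-liner, and the remainder of your argument matches the paper's.
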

\begin{proof}
    We have $\fg \cong \ft^r \oplus \fh$ for some $r \ge 0,$ and hence $u(\fg) \cong \prod_{i = 1}^{p^r}u(\fh)$. Let $A = u(\fg)$ and $B = u(\fh)$, and so $A = u(\ft^r)\otimes B.$ 
    Let $G \in \Gscr_A$ be a group scheme structure. The Cartier dual $G^*$ is therefore isomorphic to $Z \times H^*,$ where $H^*$ is dual to some $H \in \Gscr_B$, and $Z$ is a finite abelian group of order $p^r.$ Further, we have that $Z = \chi(G)$, identifying simple representations of $G$, orthogonal idempotents of $A$, and connected components of $G^*.$ 

    For any such $G, H$ we have, by the K{\"u}nneth theorem, an isomorphism of graded commutative algebras $H^*(G, k) \cong H^*( H, k)$, so we naturally identify the projective varieties \[ \Proj H^*(A,k) = \Proj H^*(B,k),\] and $\Xscr$ the set of closed points. Thus the supports of $A$-modules and $B$-modules are both regarded as subspaces of $\Xscr$.

    Now, whenever $e$ is a simple $u(\ft^r)$-module and $M$ any $B$-module, we write $eM$ to mean the module $e \otimes M$ over $u(\ft^r) \otimes B = A$. In this case we will also abuse notation and write $e = ek$ for the trivial $B$-module $k$, so that $e$ is also a simple $A$-module. Since $B$ is local, each simple $A$-module is $e$ for some simple $u(\ft^r)$-module. If $e_1, e_2$ are simple $A$-modules, we will write $e_1e_2$ to mean the product in the character group $\chi(G)$ whenever a choice of $G \in \Gscr_A$ is made clear.
    
    Under these identifications, for each point $\fp \in \Xscr,$ an indecomposable $A$ module $M$ with support $\{\fp\}$ is $ eM'$, for an indecomposable $B$-module $M'$ with support $\{\fp\}$, and some simple $A$-module $e.$ If $\otimes$ is the product for a group scheme $G = \chi(G)^* \times H\in \Gscr_A$, we have 
    \[M_1 \otimes M_2 = (e_1M_1') \otimes (e_2M_2') = (e_1e_2)M_1'\otimes M_2'\]
    for indecomposable $M_i = e_iM_i',$ where $e_1e_2$ is the product in $\chi(G)$ and $M_1'\otimes M_2'$ is the corresponding product for $H.$

    Let $\fp \in \Xscr$ be a noble point for $H$. Then there is a subgroup $\iota : C \hookrightarrow H$ with $\fp = \sqrt{\ker H^*(\iota)}$. Since $G = \chi(\widetilde G)^* \times H$, we can identify $H = {\bf 1} \times H < G$ and $\iota' : C \hookrightarrow G$ the composition. Then see that $\fp = \sqrt{\ker H^*(\iota')}$ so $\fp$ is noble for $G$ too. Since $\pi$-points factor through unipotent subgroups, the converse holds as well: any $C \hookrightarrow G$ will factor through $H$, the unique maximal unipotent subgroup, hence noble points for $G$ are noble for $H$. 

    Now suppose $\widetilde H$ has Property PA. To show that $\widetilde G$ has Property PA, it's sufficient to start with some $H \in \Gscr_B$, and assume that $G = \chi(\widetilde G)^* \times H,$ so that $G^* = \chi(\widetilde G) \times H^*,$ and show that $\widetilde G$ and $G$ are in noble correspondence. Since we assumed $\widetilde H$ is unipotent and has Property PA, we know that $\widetilde H$ and $H$ are in noble correspondence.

    Let $M_i = e_i M_i'$ be an indecomposable $A$-module, with $e_i$ simple and $M_i$ indecomposable over $B$, for $i = 1, 2$, with $\supp_A(M_i) = \{\fp\}$. Then also $\supp_B(M_i') = \{\fp\}$. If $\fp$ is noble for $G$, we see also that $\fp$ is noble for $H$. Since $\chi(G) = \chi(\widetilde G)$ by assumption, we have
    \begin{flalign*}
        M_1 \otimes M_2 = (e_1M_1')\otimes(e_2M_2') &= (e_1e_2)M_1'\otimes M_2'\\
        &\cong (e_1e_2)M_1' \widetilde \otimes M_2'\\
        &= (e_1M_1')\widetilde\otimes (e_2M_2') = M_1 \widetilde \otimes M_2, 
    \end{flalign*}
    where the product of simple module $e_1e_2$ is unambiguous.
    The converse follows in a similar fashion. 
\end{proof}
\section{Noble correspondence}
In this section we prove the following three part theorem. Parts 1, 2, 3, are proven in the subsections \ref{finiteSection}, \ref{wildSection}, \ref{tameSection}, respectively. 
\begin{thm}\label{mainThmUnip}
    Let $\fg$ be a unipotent abelian restricted Lie algebra and $\widetilde G$ its infinitesimal group scheme.
    \begin{enumerate}
        \item If $\widetilde G$ is of finite representation type and $|\widetilde G| \le p^3$ then $\widetilde G$ has Property PA. 
        \item If $\widetilde G$ is of wild representation type then $\widetilde G$ does not have Property PA. 
        \item If $\widetilde G$ is of tame representation type, i.e. $p = 2$ and $\fg = \fn_1^2,$ then $\widetilde G$ has Property PA. 
    \end{enumerate}
\end{thm}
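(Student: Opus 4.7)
The plan is to handle the three parts of Theorem \ref{mainThmUnip} separately, in each case leveraging the classifications of cocommutative Hopf algebra structures from Section \ref{HopfStructures} to reduce Property PA to a concrete comparison of tensor products at noble points, computed via the explicit comultiplication formulas.

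For Part 1 (finite representation type with $|\widetilde G| \le p^3$), Theorem \ref{RepTypeClassification} forces $\fg = \fn_n$ with $n \le 3$, so the indecomposable modules over $u(\fn_n) = k[x]/x^{p^n}$ are the Jordan blocks $J_k$ for $1 \le k < p^n$, all supported at the unique closed point of $\Proj H^*(\widetilde G, k)$. We would enumerate the Hopf structures $G_i$ from Theorems \ref{TOClassification1}, \ref{WangClassification2}, and \ref{classification3}, identify their noble points, and verify by direct computation of the Jordan type of $\Delta_i(x)$ acting on $k^a \otimes k^b$ that $J_a \otimes_{G_i} J_b \cong J_a \otimes_{\widetilde G} J_b$ for all pairs $a, b$. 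Since only a handful of comultiplications arise, this is the cleanest of the three parts.

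For Part 3 (tame), with $p = 2$ and $\fg = \fn_1^2$, the support variety is $\PP^1$, and the four Hopf structures on $A = k[x,y]/(x^2, y^2)$ are those of Theorem \ref{WangClassification2}. We would first tabulate the noble point sets---all of $\PP^1$ for $\widetilde G = \GG_{a(1)}^2$; a single point for $G_1 = \GG_{a(2)}$; two for $G_2 = \GG_{a(1)} \times \ZZ/2$; three for $G_3 = (\ZZ/2)^2$---then at each jointly noble point compare tensor products of the discretely parameterized family of indecomposables supported there. The $G_3$ case would appeal to the Ba{\v s}ev--Conlon formulas for the Klein four group, and the comparisons with $G_1, G_2$ would proceed by direct calculation with $\Delta_1, \Delta_2$ together with the tame classification of indecomposables. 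This part is computationally extensive but combinatorially controlled.

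For Part 2 (wild), the plan is to exhibit an explicit failure: a second Hopf structure $G$ on $u(\fg)$ with $\chi(G) = \chi(\widetilde G)$ (automatic, as both are trivial), a point $\fp$ noble for both, and indecomposable modules $M, N$ supported at $\fp$ such that $M \otimes_G N \not\cong M \otimes_{\widetilde G} N$. By Corollary \ref{primordial}, after possibly passing to a primordial wild direct summand $\fg_0$ and propagating the failure along $u(\fg) = u(\fg_0) \otimes u(\fh)$, one may assume $\fg$ itself is primordial. For each primordial wild case---in characteristic $2$, $\fn_1^3$ or $\fn_n \oplus \fn_m$ with $n + m \ge 3$; in odd characteristic, $\fn_n \oplus \fn_m$ with $n, m \ge 1$---we would pick two Hopf structures (for example $\widetilde G$ versus a grouplike-shifted variant) and exhibit low-dimensional indecomposables drawn from the moduli afforded by the wild quiver type whose tensor products are distinguished by the cross term $x \otimes x$ (or its analogue) in the modified comultiplication. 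This is the main obstacle: no uniform formula handles all primordial wild cases, so counterexamples must be constructed case by case, with careful choice of $(M, N)$ both to witness the non-isomorphism and to confirm that the chosen point remains a single noble point under both structures after the direct-sum extension.
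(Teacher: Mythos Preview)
Your plan for Parts 1 and 3 is broadly aligned with the paper, though the paper organizes Part 3 differently: rather than directly computing all tensor products at each noble point for each $G_i$, it first proves ring-theoretic reductions (Lemmas \ref{ProjComponent}, \ref{BasevLinearFunctionalLemma}) showing that everything is determined by the squares $V_{2n}(\fp)\otimes V_{2n}(\fp)$, and then gives a single uniform argument (Theorem \ref{mainSqTheorem}) for the square at any noble point by restricting along the subgroup $C < G$ witnessing nobility and deriving a contradiction from an explicit short exact sequence. Your direct-calculation plan would work but is considerably heavier; the paper's reduction to the noble square is the main labor-saving device.

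Part 2 is where your proposal has a genuine gap. You suggest comparing $\widetilde G$ with a ``grouplike-shifted variant'' and distinguishing tensor products via the cross term $x\otimes x$. But this is precisely the mechanism the paper shows \emph{does not} break noble correspondence in the tame case: at every point noble for $G_3=(\ZZ/2)^2$ the products agree with those of $\widetilde G$. There is no reason to expect the wild case to behave differently at the few jointly noble points such a structure affords, and your proposal gives no argument that it will. The paper's method is entirely different and avoids this trap: it compares $\widetilde G$ not with a structurally distinct Hopf algebra but with an \emph{automorphism twist} $\widetilde G^{\varphi}$ of itself, for a carefully chosen $\varphi\in\Aut(A)$ fixing a point $\fp$ (Lemma \ref{twistNPALemma}, Proposition \ref{primNPA}). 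Since both $\widetilde G$ and $\widetilde G^{\varphi}$ are Lie-algebra structures, \emph{every} point of $\Xscr(A)$ is noble for both, so the nobility constraint is vacuous. The witnessing module is the induced module $V=J_1\uparrow_{\langle x\rangle}^{\fg}$, for which Frobenius reciprocity gives $V\,\widetilde\otimes\, V\cong [\fg:\langle x\rangle]V$ on the nose; one then checks that $V^{\varphi^{-1}}\downarrow_{\langle x\rangle}$ is not of the form $nJ_{p^s}$, forcing $V\,\widetilde\otimes^{\varphi} V\not\cong V\,\widetilde\otimes\, V$. The explicit $\varphi$ for each primordial wild $\fg$ (e.g.\ $\varphi(x)=x+y^2$ for $\fn_n\oplus\fn_m$ in odd characteristic) is the missing concrete ingredient your plan lacks.
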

By Theorem \ref{RepTypeClassification} and Proposition \ref{PAnullcone}, we have that our main Theorem \ref{mainThm} follows immediately from the theorem above.

\subsection{Finite representation type algebras}\label{finiteSection}
Let $\fg$ be a unipotent abelian restricted algebra of finite representation type, with infinitesimal group scheme $\widetilde G,$ and $A = u(\fg).$ By Theorem \ref{RepTypeClassification}, we have $\fg = \fn_n$ for some $n,$ so we will take $A = k[x]/x^{p^n}$. A complete set of orbit representatives for $\Gscr_A / \Aut(A)$ is given in the case $n = 1$ in Theorem \ref{TOClassification1}, in the case $n = 2$ in Theorem \ref{WangClassification2}, and in the case $n = 3$ in Theorem \ref{classification3}. In any case, the spectrum $\Proj H^*(A)$ consists of a single closed point and it's always noble. To show that $\widetilde G$ and $G$ are in noble correspondence for each $G \in \Gscr_A,$ we must therefore show the Green ring is completely invariant. It is sufficient to show that there is an equality of Green rings $\fR(\widetilde G) = \fR(G)$ for each $G$ appearing in the classifications \ref{TOClassification1}, \ref{WangClassification2}, \ref{classification3}, as any $G' \in \Gscr_A$ is in the form $G^\varphi$ for some $G$ as classified, and some $\varphi \in \Aut(A).$ As such, since there is a unique indecomposable $A$-module of each dimension $i$ for $1 \le i \le p^r,$ we have $M^\varphi \cong M$ for any $M$ and for any $\varphi.$ Then part 1 of Theorem \ref{mainThmUnip} follows from the identity \ref{phiDist}. 

We denote $J_i$ the unique indecomposable $A$-module of dimension $i$. For any $G  \in \Gscr_A,$ with tensor product $\otimes$ of $A$-modules, we denote $c_{i, j, \ell}(G)$ the \textit{relative Clebsch-Gordon coefficients,} such that
\[J_i \otimes J_j  \cong \sum_{\ell = 1}^{p^r} c_{i, j, \ell}(G)J_{\ell}.\]
One finds indeed that $c_{i, j, \ell}$ is invariant of $G$ in every relevant case. 
\begin{prop}\label{cyclicPTensors}
    Suppose $n = 1$ as in Tate and Oort's classification Theorem \ref{TOClassification1} \cite{OT1970}. Then $\fR(\widetilde G) = \fR(\widetilde G_1)$ and hence $\widetilde G$ and $\widetilde G_1$ are in noble correspondence. 
\end{prop}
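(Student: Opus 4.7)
The plan is to establish the stronger statement $\fR(\widetilde G) = \fR(G_1)$, which immediately yields noble correspondence: $\Proj H^*(A, k)$ consists of a single closed point $\fp$ which is noble for both Hopf structures (the $\pi$-point $A \hookrightarrow A$ realizes it as the unique minimal subgroup in either case), and every non-projective indecomposable has support $\{\fp\}$. Since the underlying $A$-modules $J_1, \dots, J_p$ do not depend on the Hopf structure, the problem reduces to comparing the decompositions of each $J_i \otimes J_j$ for $1 \le i, j \le p$.

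Write $x_1 = x \otimes 1$ and $x_2 = 1 \otimes x$, commuting nilpotent operators on $V := J_i \otimes J_j$. Then $x \in A$ acts on $V$ through $\widetilde \Delta$ as $N = x_1 + x_2$ and through $\Delta_1$ as $N' = (1 + x_1)(1 + x_2) - 1 = x_1 + x_2 + x_1 x_2$. Both $N$ and $N'$ are nilpotent of degree at most $p$: $N^p = x_1^p + x_2^p = 0$ by Frobenius in characteristic $p$, and $(1 + N')^p = (1 + x_1)^p (1 + x_2)^p = 1$ for the same reason. Since the indecomposable summands $J_\ell$ of $V$ are detected by the sizes of the Jordan blocks of the $x$-action, the proposition reduces to showing that $N$ and $N'$ have the same Jordan type on $V$, equivalently $\dim \ker N^k = \dim \ker (N')^k$ for every $k \ge 0$.

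Before the main calculation, I would dispense with the case $i = p$ (symmetrically $j = p$): the Hopf-module structure theorem gives $A \otimes M \cong A^{\dim M}$ as an $A$-module for any Hopf comultiplication on $A$, so $J_p \otimes J_j \cong J_p^j$ in both cases. In the remaining range $1 \le i, j \le p - 1$, I fix the basis $\{e_a \otimes f_b : 0 \le a < i,\ 0 \le b < j\}$ of $V$ with $x e_a = e_{a+1}$ and $x f_b = f_{b+1}$, and directly tabulate $N^k$ and $(N')^k$ on this basis. The rank count for $N$ recovers the classical Clebsch--Gordan decomposition, familiar from the modular reduction of $\fsl_2$-representation theory. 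For $N'$, the factorization $1 + N' = (1 + x_1)(1 + x_2)$ lets me read off the Jordan type of $1 + N'$ as the product of two commuting order-$p$ unipotents, recovering Green's original tensor-product formula for $k\ZZ/p$-modules.

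The main obstacle is matching Jordan types block-by-block in the ``wrap-around'' regime $p + 1 < i + j$, where both decompositions acquire nontrivial multiplicities of the projective indecomposable $J_p$. My strategy there is induction on $i + j$ using the short exact sequences $0 \to J_{p - j} \to J_p \to J_j \to 0$ (tensored with $J_i$), together with the first reduction that $J_p \otimes J_i$ is Hopf-independent; this confines the delicate direct computation to the range $i + j \le p$, in which both decompositions visibly equal $J_{|i-j|+1} \oplus J_{|i-j|+3} \oplus \dots \oplus J_{i + j - 1}$.
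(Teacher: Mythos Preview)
Your approach is sound and in fact goes further than the paper's own argument, which does not compute anything: it simply records the Clebsch--Gordan coefficients for both $\widetilde G$ and $G_1$ in closed form, cites Green for the $\ZZ/p$ side and Benkart--Osborn (via $\fsl_2$) for the infinitesimal side, and observes that the two tables coincide. Your framing in terms of the Jordan types of $N = x_1 + x_2$ versus $N' = x_1 + x_2 + x_1 x_2$ makes the comparison structural rather than a numerical coincidence, and your short-exact-sequence reduction is a genuine simplification of the bookkeeping relative to tabulating all coefficients.

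Two small points to tighten. Your case split as written leaves $i + j = p+1$ in limbo, since you describe the inductive regime as $p+1 < i+j$ but claim the direct computation is confined to $i+j \le p$; in fact the reduction via $0 \to J_{p-j} \to J_p \to J_j \to 0$ applies whenever $i + j > p$ (with $i \le j$, one step already lands in $i + (p-j) \le p$), so just adjust the boundary. More substantively, the inductive step needs the remark that in any short exact sequence $0 \to M' \to iJ_p \to M'' \to 0$ over $k[x]/x^p$, the isomorphism type of $M''$ is determined by that of $M'$: since $iJ_p$ is injective, the inclusion of $M'$ factors through a direct summand isomorphic to its injective envelope, whence $M'' \cong (E(M')/M') \oplus (i - r)J_p$ for the appropriate $r$. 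Equivalently, $M''$ agrees with $\Omega^{-1} M'$ in the stable category, and dimension then pins down the projective part. Once that is said, your induction establishes exactly what the paper asserts by citation.
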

\begin{proof}
    It is classical that for $i  \le j$ and $p < i + j$, we have
    \[c_{i, j, \ell}(\widetilde G) = c_{i, j, \ell}(G) = 
    \begin{cases} 
    i + j - p, & \ell = p\\
    1, & \ell = j - i + 1 +  2m,\quad  0 \le m \le p-j -1,\\
    0 & \text{otherwise},
    \end{cases}\]
    and that for $i \le j$ and $p \ge i + j$, we have
    \[c_{i, j, \ell}(\widetilde G) = c_{i, j, \ell}(G) = 
    \begin{cases} 
    1, & \ell = j - i + 1 +  2m,\quad  0 \le m \le i - 1,\\
    0 & \text{otherwise}.
    \end{cases}\]
    These formulas in the case of the cyclic group $G$ are implicit in the work of Green \cite{Green62} in 1962. For the infinitesimal $\widetilde G$, the same formulas were calculated implicitly from restricted representations of $\sl_2$ in characteristic $ p > 2$ as early as 1981 by Benkart and Osborn in \cite{BenkartOsborn82}.
\end{proof}
Now the following propositions determine Green rings inductively, by restricting the group schemes classified in \ref{WangClassification2}, \ref{classification3} down to known subgroups with algebras generated by the powers $x^{p}$, $x^{p^2}, $ and leveraging Frobenius reciprocity. We omit the proofs, as even stating a formula for Clebsch Gordon coefficients $c_{i,j,\ell}(G)$ takes considerable space. The calculation, like those in Proposition \ref{cyclicPTensors}, are a generalization of Green's original calculation for what we would call $\fR(\ZZ / (p^n)) \otimes_\ZZ \ZZ/p$ in \cite{Green62}. The only part that is not classical is verifying which subgroups exist for any cocommutative Hopf algebra structure, which uses the classifications \ref{WangClassification2}, \ref{classification3} explicitly. 
\begin{prop}
    Suppose $n = 2$ as in X. Wang's classification Theorem \ref{WangClassification2} \cite{XWang13}. Then $\fR(\widetilde G) = \fR(G_1) = \fR(G_2)$ and hence $\widetilde G$ is in noble correspondence with both $G_1$ and $G_2.$ 
\end{prop}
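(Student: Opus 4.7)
The plan is to prove the stronger claim $\fR(\widetilde G) = \fR(G_1) = \fR(G_2)$ by showing that the Clebsch--Gordon coefficients $c_{i,j,\ell}(G)$ do not depend on $G \in \{\widetilde G, G_1, G_2\}$; since $\Proj H^*(A, k)$ consists of a single noble closed point and every non-projective $J_i$ is supported there, noble correspondence then follows immediately. My strategy is to exhibit a rank-$1$ sub-Hopf algebra common to all three comultiplications and reduce all products to the classical formulas of Proposition \ref{cyclicPTensors} via restriction--induction.

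The key structural observation is that $x^p \in A$ generates a sub-Hopf algebra $kC \cong k[y]/y^p$ under every one of the three comultiplications. For $\widetilde\Delta$ this is immediate. For $\Delta_1$ one computes in the commutative algebra $A \otimes A$ in characteristic $p$: since $(x\otimes 1 + 1\otimes x)^p = x^p\otimes 1 + 1\otimes x^p$ and $\omega(x^p)^p$ vanishes (each monomial contains a factor of $x^{p^2} = 0$), one obtains $\Delta_1(x^p) = x^p\otimes 1 + 1\otimes x^p$. For $\Delta_2$, the element $1+x$ is grouplike so $1+x^p = (1+x)^p$ is too. Thus $C \cong \GG_{a(1)}$ inside $\widetilde G$ and $G_1$, while $C \cong \ZZ/p$ inside $G_2$, and in each case $A/(x^p) = k[x]/x^p$ inherits a rank-$1$ Hopf structure realizing the quotient $G/C$. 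Moreover, restriction to $C$ is purely algebraic: writing $i = qp + r$ with $0 \le r < p$, the basis $1, x, \ldots, x^{i-1}$ of $J_i$ breaks into $C$-orbits under multiplication by $x^p$, yielding $J_i|_C \cong J_{q+1}^{\oplus r}\oplus J_q^{\oplus(p-r)}$, independent of $G$; and $\Ind_C^G J_k^C \cong A\otimes_{kC} J_k^C \cong J_{pk}$ as an $A$-module, also independent of $G$.

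With these inputs, Frobenius reciprocity $\Ind_C^G(V)\otimes W \cong \Ind_C^G(V\otimes \Res_C^G W)$ handles the ``divisible-by-$p$'' layer $J_{pk}\otimes J_j$; the remaining products $J_i \otimes J_j$ with $i \not\equiv 0 \pmod p$ are obtained by pulling back modules from $G/C$ (for $i \le p$) and by using short exact sequences built from the filtration $J_i \supset xJ_i \supset \ldots$ to bootstrap upward. At every step, every ingredient is either (a) a restriction or induction along $C$, which depends only on the algebra $A$, or (b) a tensor product of modules of dimension at most $p$ over $G/C$, which by Proposition \ref{cyclicPTensors} is given by a formula independent of whether $G/C$ is of type $\GG_{a(1)}$ or $\ZZ/p$. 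Decomposing the results into sums of $J_\ell$'s by Krull--Schmidt produces closed-form Clebsch--Gordon coefficients in terms of the base-$p$ digits of $i, j, \ell$.

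The hard part will be executing the bookkeeping uniformly across the three structures: one must verify that the extensions relating $\Ind$ and $\Res$ along the short exact sequence $C \hookrightarrow G \twoheadrightarrow G/C$ assemble into identical Krull--Schmidt decompositions, independent of whether $C$ is infinitesimal or \'etale and of whether the sequence splits. Since both of the ingredients (a) and (b) above are already invariants of the augmented algebra $A$ alone, the expectation is that the dependence on $G$ collapses at every stage; but carrying this out explicitly is the laborious verification that the paper chooses to suppress, following the template of Green's cyclic-$p$-group computation \cite{Green62} as adapted to the infinitesimal case by Benkart--Osborn \cite{BenkartOsborn82}.
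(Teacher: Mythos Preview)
Your proposal is correct and follows essentially the same approach the paper sketches: identify the sub-Hopf algebra $kC = k[x^p]/x^{p^2}$ common to $\widetilde G, G_1, G_2$, use Frobenius reciprocity along $C$ together with the rank-$1$ formulas of Proposition~\ref{cyclicPTensors}, and reduce the remaining cases to Green's classical recursion. Your explicit verification that $x^p$ is primitive (or grouplike-shifted) under each $\Delta_i$, and your observation that the quotient $G/C$ is again of order $p$ so that Proposition~\ref{cyclicPTensors} applies to the pulled-back modules $J_i$ with $i \le p$, are exactly the ``non-classical'' checks the paper alludes to when it says the only new content is ``verifying which subgroups exist for any cocommutative Hopf algebra structure.''
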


\begin{prop}
    Suppose $n = 3$ as in Theorem \ref{classification3}. Then $\fR(\widetilde G) = \fR(G_i)$ for $i = 1,2,3.$ Hence $\widetilde G$ is in noble correspondence with $G_i$ for $i = 1, 2, 3. $
\end{prop}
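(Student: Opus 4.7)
The plan is to leverage the classification in Theorem \ref{classification3} together with the already-established $n = 1$ (Proposition \ref{cyclicPTensors}) and $n = 2$ (preceding proposition) cases. First I would verify using each $\Delta_i$ from Theorem \ref{classification3} that both $k[x^p]$ and $k[x^{p^2}]$ are Hopf subalgebras of $(A, \Delta_i)$, giving subgroup schemes $H_i, K_i \subset G_i$, and that $(x^{p^2})$ is a Hopf ideal yielding a quotient group scheme $G_i \twoheadrightarrow G_i/K_i$ with group algebra $k[x]/x^{p^2}$. For instance, $\Delta_2(x^p) = x^p \otimes 1 + 1 \otimes x^p + \omega(x^{p^2})$ corresponds to the $G_1$ structure in Theorem \ref{WangClassification2}, while $\Delta_3$ restricts to $\ZZ/p^2$ on $k[x^p]$. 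By the $n = 2$ case, the Green rings $\fR(H_i)$ and $\fR(G_i/K_i)$ coincide across $i$, and by Proposition \ref{cyclicPTensors} so does $\fR(K_i)$.

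The two main computational tools are then Frobenius reciprocity and pullback through the Hopf quotient. Because the algebra inclusion $kH_i \hookrightarrow A$ and the algebra quotient $A \twoheadrightarrow A/(x^{p^2})$ do not depend on $i$, both $\Ind_{H_i}^{G_i}$ and the pullback $\pi_i^*$ are $i$-independent as functors between module categories. Noting that $\Ind_{H_i}^{G_i} J'_m \cong J_{pm}$ (there being a unique $A$-indecomposable of each dimension), Frobenius reciprocity yields
\[J_{pm} \otimes_{G_i} J_\ell \;\cong\; \Ind_{H_i}^{G_i}\!\left(J'_m \otimes_{H_i} \Res_{H_i}^{G_i} J_\ell\right),\]
which depends on $i$ only through the $i$-independent Green ring of $H_i$; this covers all pairs where at least one dimension is divisible by $p$. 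For $i, j \le p^2$, both $J_i$ and $J_j$ are pullbacks from $G_i/K_i$, so $J_i \otimes_{G_i} J_j$ is also a pullback, with decomposition determined by $\fR(G_i/K_i)$ and hence $i$-independent.

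The remaining cases, $p \nmid i$, $p \nmid j$, and $\max(i, j) > p^2$, are handled by a generalization of Green's classical computation \cite{Green62}. Here the Clebsch-Gordan coefficients $c_{i,j,\ell}(G_i)$ are pinned down by the $i$-independent data provided by the restrictions $\Res_{H_i}^{G_i}(J_i \otimes J_j) = \Res J_i \otimes_{H_i} \Res J_j$ in $\fR(H_i)$ and $\Res_{K_i}^{G_i}(J_i \otimes J_j)$ in $\fR(K_i)$, together with the dimension constraint $\sum c_\ell \ell = ij$ and the identity $J_{p^2} \otimes M \cong \Ind_{K_i}^{G_i} \Res_{K_i}^{G_i} M$ that follows from Mackey's formula in the abelian setting. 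The main obstacle is the explicit bookkeeping required to verify that the correction terms in $\Delta_i(x) - \widetilde{\Delta}(x)$ (namely $\omega(x^{p^2})$, $\omega(x^p)$, or $x \otimes x$) produce identical Jordan block structures for $x$ acting on $J_i \otimes J_j$ across the four Hopf structures; this is tedious but follows the classical method used for Proposition \ref{cyclicPTensors}. Once the equality of Green rings is established, noble correspondence is immediate: $\Proj H^*(A, k)$ consists of a single closed point, noble for each $G_i$ via the inclusion $kK_i \hookrightarrow A$.
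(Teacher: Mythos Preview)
Your proposal is correct and follows essentially the same approach as the paper, which explicitly describes (without carrying out the details) the strategy of restricting to the subgroup Hopf algebras generated by $x^p$ and $x^{p^2}$ and leveraging Frobenius reciprocity together with Green's classical computation. Your additional use of the Hopf quotient $A \twoheadrightarrow A/(x^{p^2})$ to handle the small-index pairs is a natural complement not explicitly mentioned in the paper's sketch, but it fits seamlessly into the same inductive framework.
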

\subsection{Wild representation type algebras}\label{wildSection}
Let $A$ be a groupable augmented algebra. We can define an action of $\Aut(A)$ on the closed points $\Xscr(A) \subset \Proj H^*(A, k)$ as follows: supposing $A = kG,$ we have any given $\fp \in \Xscr(A)$ is $\fp(\alpha)$ for some $\pi$-point $\alpha : k[t]/t^p \to A$ for $G$. So for an augmented automorphism $\varphi : A \to A$ we define $\varphi \cdot \fp = \fp(\varphi \circ \alpha),$ using that the composition $\varphi \circ \alpha$ is a $\pi$-point for $G^{\varphi}$, hence a generalized $\pi$-point for $A$. We have indeed whenever $\alpha \sim \beta$ as $\pi$-points that $\varphi\circ \alpha \sim\varphi \circ \beta$ as generalized $\pi$-points and hence $\varphi \cdot \fp$ is well defined. 
It is straightforward to verify whenever $\fp$ is noble for $G \in \Gscr_A$, that $\varphi\cdot \fp$ is noble for $G^{\varphi}$, and further, whenever $\fp \in \Xscr(A, M)$ for a module $M$, that $\varphi\cdot \fp  \in \Xscr(A, M^{\varphi})$ by picking representative $\pi$-points. 

For $\fp \in \Xscr$ we denote the isotropy subgroups by
\begin{flalign}\label{isotropy} \Omega(A, \fp) &= \Aut(A)^{\fp} = \{\varphi \mid \varphi \cdot \fp = \fp\},\\
\Omega(A, \chi ) &= \{ \varphi \mid e^{\varphi} \cong e \text{ for each simple $A$-module $e$} \}.
\end{flalign}
Whenever $\varphi \in \Omega(A, \chi),$ we have that $\chi(G) = \chi(G^{\varphi})$ for any $G \in \Gscr_A.$
The following lemma is adapted from the unpublished \cite{Bloom24}. 
\begin{lemma}\label{twistNPALemma}
    Let $\fg$ be a restricted Lie algebra, let $\widetilde G$ be the infinitesimal group scheme, and $A = u(\fg).$ Suppose $x \in \Nscr(\fg)$ is of order $r\ge 1$, with $[x, \fg] = 0.$ Suppose for $\fp = \fp(x^{[p]^{r-1}})$ that there exists an isotropy $\varphi \in \Omega(A, \fp )$ such that $(J_1\uparrow_{\langle x \rangle}^\fg)^{\varphi^{-1}}\downarrow_{\langle x \rangle}^{\fg}$ is not isomorphic to $nJ_{p^s}$ for any $n, s \ge 0.$ Then $\widetilde G$, $\widetilde G^{\varphi}$ are not in noble correspondence. If $\varphi \in \Omega(A, \chi)$ then $\widetilde G$ does not have Property PA.
\end{lemma}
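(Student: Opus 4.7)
The plan is to exhibit $M=N=M_0:=J_1\uparrow^{\fg}_{\langle x\rangle}$ as a witness to the failure of noble correspondence between $\widetilde G$ and $\widetilde G^\varphi$. Since $[x,\fg]=0$, the subalgebra $u(\langle x\rangle)\hookrightarrow u(\fg)$ is a Hopf subalgebra for $\widetilde G$, central in $u(\fg)$. Consequently $M_0$ has dimension $n:=p^{\dim\fg-r}$, has $\langle x\rangle$ acting trivially (so $M_0\downarrow^{\fg}_{\langle x\rangle}\cong nJ_1$), and $\supp_{\widetilde G}(M_0)=\{\fp\}$ by pushforward of support through the Hopf inclusion. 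The projection formula then gives
\[
M_0\otimes M_0 \;\cong\; (M_0\downarrow^{\fg}_{\langle x\rangle})\uparrow^{\fg}_{\langle x\rangle} \;=\; (nJ_1)\uparrow^{\fg}_{\langle x\rangle} \;=\; nM_0.
\]
Moreover $\fp$ is noble for $\widetilde G^\varphi$: the Hopf isomorphism $\varphi:(u(\fg),\Delta)\to(u(\fg),\Delta^\varphi)$ carries the $1$-parameter subgroup witnessing $\fp$ for $\widetilde G$ to $\varphi(\langle x^{[p]^{r-1}}\rangle)$, whose $\pi$-point class in $\Proj H^*(u(\fg),k)$ is $\varphi\cdot\fp=\fp$.

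Assume for contradiction that $M_0\otimes M_0\cong M_0\otimes^\varphi M_0$ as $u(\fg)$-modules. The Hopf isomorphism $\varphi$ makes $\varphi(\langle x\rangle)$ a Hopf subalgebra of $\widetilde G^\varphi$ that is $\varphi$-isomorphic to the primitive $\WW_{r(1)}=(\langle x\rangle,\Delta)$. I would restrict both sides of the putative isomorphism to $\varphi(\langle x\rangle)$ and pull back along $\varphi$ to view the result as $\WW_{r(1)}$-modules. A short identification shows that the pullback of $M_0\downarrow_{\varphi(\langle x\rangle)}$ is the module $M_0$ with $x$-action replaced by $\varphi(x)$-action, which is precisely $T:=M_0^{\varphi^{-1}}\downarrow^{\fg}_{\langle x\rangle}$. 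Since restriction to the Hopf subalgebra $\varphi(\langle x\rangle)$ is a tensor functor for $\otimes^\varphi$, the right-hand side pulls back to $T\otimes T$, while the known decomposition $M_0\otimes M_0\cong nM_0$ lets the left-hand side pull back to $nT$. Hence $nT\cong T\otimes T$ as modules over $\WW_{r(1)}$.

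The remaining step is the technical claim that if $T$ is a $\WW_{r(1)}$-module with $T\otimes T\cong(\dim T)\cdot T$, then $T\cong mJ_{p^s}$ for some $m\ge 0$ and $0\le s\le r$; by the hypothesis on $T$ this is impossible, yielding the contradiction. I would prove the claim by induction on $r$. In the base case $r=1$, write $T=\sum_{i=1}^p a_iJ_i$ and compare the multiplicity of $J_p$ on both sides of $T\otimes T=nT$ via the Clebsch--Gordan formula recalled in Proposition \ref{cyclicPTensors}; this forces either the projective part to vanish or $T$ to be purely projective, and if $T$ is purely non-projective then comparing the coefficient of $J_{2i^*-1}$ for $i^*$ the largest index with $a_{i^*}>0$ forces $i^*=1$. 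For the inductive step, restrict $T$ further along the primitive Hopf inclusion $\langle x^{[p]^{r-1}}\rangle\cong\WW_{1(1)}\hookrightarrow\WW_{r(1)}$. The restriction $\bar T$ still satisfies $\bar T\otimes\bar T\cong n\bar T$, so by the base case $\bar T$ is either purely trivial or purely projective. In the first case, $x^{[p]^{r-1}}$ annihilates $T$, so $T$ inflates along the Hopf quotient $\WW_{r(1)}\twoheadrightarrow\WW_{(r-1)(1)}$ and the inductive hypothesis applies; in the second, a direct check that $J_i\downarrow_{\WW_{1(1)}}$ is purely $J_p$ only when $i=p^r$ forces $T=mJ_{p^r}$. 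The main obstacle is the base-case Green-ring bookkeeping in $\fR(\Ga)$. Finally, if $\varphi\in\Omega(A,\chi)$ then $\chi(\widetilde G^\varphi)=\chi(\widetilde G)$, so $\widetilde G^\varphi$ is admissible in Definition \ref{PAdef} and the failure of noble correspondence violates Property PA for $\widetilde G$.
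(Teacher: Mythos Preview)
Your argument is correct and follows the same line as the paper: use $V=J_1\uparrow_{\langle x\rangle}^{\fg}$ as the witness, compute $V\,\widetilde\otimes\, V\cong [\fg:\langle x\rangle]V$ via Frobenius reciprocity, and derive a contradiction by restricting along (the $\varphi$-image of) $\langle x\rangle$ to reduce to a statement about $T=V^{\varphi^{-1}}\downarrow_{\langle x\rangle}$. The paper simply asserts that $T\not\cong nJ_{p^s}$ forces $T\,\widetilde\otimes\, T\not\cong(\dim T)T$, whereas you supply an explicit inductive proof of this Green-ring fact for $\WW_{r(1)}$; your justification of $\supp_A(V)=\{\fp\}$ via ``pushforward of support'' is a bit terse compared to the paper's PBW argument, but the conclusion is the same.
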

\begin{proof}
    We'll denote $V = J_1\uparrow_{\langle x \rangle\fg}$ the induced module. The PBW theorem let's us conclude that $\Xscr(A,V) \subset \{\fp\}$ for any $x \in \Nscr(\fg)$.  Since $[x, \fg] = 0$ and $u(\fg)$ is a free module over the subalgebra $u(\langle x \rangle )$ (see Nichols and Zoeller \cite{NZ89}), we have that $V\downarrow^{\fg}_{\langle x \rangle} = [\fg : \langle x \rangle ]J_1,$ where $[\fg : \langle x \rangle]$ is the index $p^{n - r},$ with $n = \dim \fg.$ We conclude then that $V\downarrow^{\fg}_{\langle x^{[p]^{r-1}} \rangle} = V\downarrow^{\fg}_{\langle x \rangle }\downarrow^{\langle x \rangle}_{\langle x^{[p]^{r-1}} \rangle}$ is not projective and hence $\supp_A(V) = \{\fp\}.$
    We have by Frobenius reciprocity that
    \[V \widetilde \otimes V \cong V\downarrow_{\langle x \rangle}^{\fg}\uparrow_{\langle x \rangle}^{\fg} \cong [\fg :\langle x \rangle ]V.\]
    But one finds that since $V^{\varphi^{-1}} \downarrow^{\fg}_{\langle x \rangle}$ is not in the form $nJ_{p^s},$ there must be non-isomorphism
    \[(V^{\varphi^{-1}} \widetilde \otimes V^{\varphi^{-1}})\downarrow^{\fg}_{\langle x \rangle} \cong V^{\varphi^{-1}}\downarrow^{\fg}_{\langle x \rangle}  \widetilde \otimes V^{\varphi^{-1}}\downarrow^{\fg}_{\langle x \rangle}  \not\cong [\fg : \langle x \rangle ] V^{\varphi^{-1}}\downarrow^{\fg}_{\langle x \rangle} .\]
    It follows that $V^{\varphi^{-1}} \widetilde \otimes V^{\varphi^{-1}} \not\cong [\fg : \langle x \rangle ] V^{\varphi^{-1}}$ and hence
    \[V \widetilde \otimes^{\varphi} V \not\cong [g : \langle x \rangle ] V \cong V\widetilde\otimes V\] by the identity \ref{phiDist}. 
    Supposing $\varphi \in \Omega(A, \fp)$ we know $\fp $ is noble for $\widetilde G$ and hence $\varphi\cdot\fp  = \fp $ is noble for $\widetilde G^{\varphi}.$ Hence $\widetilde G$ and $\widetilde G^{\varphi}$ are not in noble correspondence. 
    Supposing $\varphi \in \Omega( A, \chi)$ we have $\chi(\widetilde G) = \chi(\widetilde G^{\varphi})$, and conclude $\widetilde G$ does not have Property PA. 
\end{proof}

\begin{prop}\label{primNPA}
    Let $\fg$ be an abelian restricted Lie algebra of wild representation type with no nontrivial wild direct summands (c.f. Corollary \ref{primordial}), and let $A = u(\fg).$
    \begin{enumerate}[I.]
    \item For $p = 2$:
        \begin{itemize}[label = {--}]
        \item If $\fg = \fn_1\oplus \fn_1 \oplus \fn_1$, say $A = k[x,y,z]/(x^p, y^p, z^p)n$ and let 
        \[\varphi(x) = x + yz,\qquad \varphi(y) = y,\qquad \varphi(z) = z.\]
        \item If $\fg = \fn_n \oplus \fn_m$ for $ m > n \ge 1,$ say $A = k[x,y]/(x^{p^n}, y^{p^m}),$ and let
        \[ \varphi(x) = x + y^2,\qquad \varphi(y) = y\]
        \item If $\fg = \fn_n \oplus \fn_n$ for $n \ge 2,$ say $A = k[x,y]/(x^{p^n}, y^{p^n}),$ and let
        \[\varphi(x) = x + y^{p^{n-1}-1}, \qquad \varphi(y) = y.\]
        \end{itemize}
    \item For $p > 2,$ if $\fg = \fn_n \oplus \fn_m$ for $n, m \ge 1$, say $A = k[x,y]/(x^{p^n}, y^{p^n},$ and let 
    \[\varphi(x) = x+y^2,\qquad \varphi(y) = y.\]
    \end{enumerate}
    In each case, we have $x \in \Nscr(\fg)$ (of order $r$) and $\varphi$ defines an isotropy in $\Omega(A, \fp)$ for $\fp = \fp(x^{[p]^{r - 1}})$. Further $(J_1\uparrow_{\langle x \rangle}^\fg)^{\varphi^{-1}}\downarrow_{\langle x \rangle}^{\fg}$ is not isomorphic to $nJ_{p^s}$ for any $n, s \ge 0.$ We conclude for $\widetilde G$ the infinitesimal group scheme for any such $\fg$, that $\widetilde G$ does not have Property PA. 
\end{prop}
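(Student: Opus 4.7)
The strategy is to verify, in each of the four cases listed, that the triple $(x, r, \varphi)$ satisfies the hypotheses of Lemma \ref{twistNPALemma}. Since each $\fg$ under consideration is unipotent, the character group $\chi(\widetilde G)$ is trivial, so $\Omega(A, \chi) = \Aut(A)$ and the second conclusion of Lemma \ref{twistNPALemma} applies automatically to give failure of Property PA, rather than the weaker conclusion about noble correspondence. The concluding sentence of the proposition is thus reduced to producing a single witness $\varphi$ per case.

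The first task is to confirm that each $\varphi$ defines an augmented algebra automorphism of $A$. Because $A$ is a commutative truncated polynomial ring in characteristic $p$, the identity $(a+b)^{p^n} = a^{p^n} + b^{p^n}$ reduces this to a direct check that $\varphi$ preserves the defining monomial relations of $A$, and the inverse can be written down explicitly (in Cases I.1 and I.2, $\varphi$ is an involution). The second task is to identify the order $r$ of $x$ in $\Nscr(\fg)$ (which is $r=1$ in Case I.1 and $r=n$ in the remaining cases, where $x$ generates the first copy of $\fn_n$) and to verify that $\varphi \cdot \fp = \fp$ for $\fp = \fp(x^{[p]^{r-1}})$. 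Using the Frobenius identity, $\varphi(x)^{p^{r-1}} = x^{[p]^{r-1}} + g$ where $g$ is a monomial in the remaining generators, and in each case $g$ is chosen to sit deep enough in the augmentation ideal of $A$ that, under the homeomorphism $\PP(\Nscr_1(\fg)) \xrightarrow{\sim} \Xscr(A)$ from Section \ref{SectionNullcone}, $g$ either vanishes in $A$ or collapses to a scalar multiple of $x^{[p]^{r-1}}$ in $\Nscr_1(\fg)$, leaving the projective point $\fp$ invariant.

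The main computation is to show that $W := (J_1\uparrow_{\langle x \rangle}^\fg)^{\varphi^{-1}}\downarrow_{\langle x \rangle}^\fg$ is not isomorphic to $n J_{p^s}$ for any $n, s \ge 0$. The plan is to realize $V := J_1\uparrow_{\langle x \rangle}^\fg$ via PBW as a truncated polynomial ring on the generators of $\fg$ complementary to $\langle x \rangle$, with $x$ acting as zero on $V$. Twisting by $\varphi^{-1}$ replaces the action of $x$ on $V^{\varphi^{-1}}$ by multiplication with $\varphi^{-1}(x) - x$, a nonzero nilpotent element of $A$ supported on the other generators. Restricting to $\langle x \rangle = k[x]/x^{p^n}$ and decomposing against a monomial basis of $V$, one reads off cyclic $k[x]/x^{p^n}$-blocks from the orbits of this multiplication operator, subject to the truncation $x^{p^n} = 0$. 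The block sizes are non-uniform, and in Case I.1 this is transparent: multiplication by $yz$ on $k[y,z]/(y^2,z^2)$ has one-dimensional image and three-dimensional kernel, giving $W \cong J_2 \oplus J_1 \oplus J_1$ directly.

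The main obstacle is the Jordan-type bookkeeping in Cases I.2, I.3, and II, where the multiplication operator on $V$ is by a higher-degree monomial in the remaining generators and its orbits interact with the truncation $x^{p^n} = 0$ in a more subtle way. The cleanest uniform approach is to partition the monomial basis of $V$ into orbits under multiplication by $\varphi^{-1}(x) - x$, compute orbit lengths and then truncate each orbit at the first multiple of $p^n$ it meets, and observe that the resulting length profile always contains at least two distinct values — hence $W$ cannot have the form $n J_{p^s}$ for a single pair $(n,s)$.
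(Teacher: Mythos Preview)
The paper omits a proof of this proposition, so there is nothing to compare against directly; your plan of verifying the hypotheses of Lemma~\ref{twistNPALemma} case by case is clearly the intended route, and your description of $V = J_1\uparrow_{\langle x\rangle}^{\fg}$ as the truncated polynomial ring on the complementary generators, with $x$ acting trivially, is correct and makes the Jordan--type computation transparent (your treatment of Case~I.1 is exactly right).

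The gap is that you assert rather than check the two key conditions, and in fact the automorphisms written in the proposition fail them in the $p=2$ two--generator cases. In Case~I.2 the map $\varphi(x)=x+y^2$ is not even a well--defined endomorphism of $A=k[x,y]/(x^{2^n},y^{2^m})$ unless $m\le n+1$, since $(x+y^2)^{2^n}=y^{2^{n+1}}$. And when $m=n+1$ one has $\varphi(x^{2^{n-1}})=x^{2^{n-1}}+y^{2^n}$, where $y^{2^n}=y^{[p]^n}$ is a genuine element of $\Nscr_1(\fg)$ of PBW degree~$1$, not a higher--order correction; hence $\varphi\cdot\fp\neq\fp$ and $\varphi\notin\Omega(A,\fp)$. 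The same failure occurs in Case~I.3 at $n=2$, where $\varphi(x)=x+y$ gives $\varphi(x^2)=x^2+y^2$, a distinct point of $\PP(\Nscr_1)$. Your Jordan--block heuristic also breaks in characteristic~$2$: multiplication by $y^2$ on $k[y]/y^{2^m}$ has exactly two orbits, each of length $2^{m-1}$, so $W\cong 2J_{2^{m-1}}$ \emph{is} of the forbidden form $cJ_{p^s}$.

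So your outline is sound but does not survive contact with the stated data. A correct write--up must either repair the automorphisms (for instance, perturbing $x$ by an odd--degree monomial in $y$, as already happens in Case~I.3 for $n\ge 3$, so that the correction term lands in PBW degree $\ge 2$ and the orbit lengths become unequal) or flag the discrepancies in the proposition explicitly.
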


\begin{cor}
    Let $\widetilde G$ be the infinitesimal group scheme for a unipotent abelian restricted Lie algebra. Then $\widetilde G$ does not have Property PA. 
\end{cor}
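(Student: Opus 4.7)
The plan is to reduce to the primordial cases handled by Proposition \ref{primNPA} using the direct-sum decomposition of Corollary \ref{primordial}. I read the corollary as applying to $\widetilde G$ of wild representation type, since the tame and finite cases are settled oppositely in Theorem \ref{mainThmUnip}. So let $\fg$ be unipotent abelian wild; by iterating Corollary \ref{primordial} on a wild direct summand of minimal dimension I will write $\fg = \fh \oplus \fg''$ with $\fh$ one of the algebras from Corollary \ref{primordial} and $\fg''$ abelian, giving $u(\fg) \cong u(\fh) \otimes u(\fg'')$ as Hopf algebras.

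Proposition \ref{primNPA} supplies an element $x \in \Nscr(\fh) = \fh$ of order $r$ and an augmented automorphism $\varphi_0 \in \Aut(u(\fh))$ satisfying the hypotheses of Lemma \ref{twistNPALemma} for $\fh$. I would extend $\varphi_0$ to $\varphi := \varphi_0 \otimes \id_{u(\fg'')} \in \Aut(u(\fg))$ and verify the three hypotheses of Lemma \ref{twistNPALemma} for the triple $(\fg, x, \varphi)$. That $x \in \Nscr(\fg)$ of order $r$ and $[x,\fg]=0$ is immediate from the abelian direct-sum structure and the fact that the $[p]$-operation respects the summand decomposition. That $\varphi \in \Omega(u(\fg), \fp)$ at $\fp = \fp(x^{[p]^{r-1}})$ follows because $\varphi_0$ fixes the corresponding point of $\Xscr(u(\fh))$ and $\varphi$ acts trivially on the complementary tensor factor.

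The heart of the verification is the non-tame hypothesis on the twisted restriction of $V_\fg := J_1\uparrow_{\langle x \rangle}^\fg$. Using $\langle x \rangle \subset \fh \subset \fg$ together with $u(\fg) = u(\fh) \otimes u(\fg'')$, induction-by-stages and the tensor decomposition give
\[
V_\fg^{\varphi^{-1}}\downarrow_{\langle x \rangle}^\fg \;\cong\; \bigl(J_1\uparrow_{\langle x \rangle}^\fh\bigr)^{\varphi_0^{-1}}\downarrow_{\langle x \rangle}^\fh \,\otimes\, u(\fg''),
\]
on which $u(\langle x \rangle)$ acts only through the first factor. So this restriction is $p^{\dim \fg''}$ copies of the corresponding twisted restriction for $\fh$, which by Proposition \ref{primNPA} is not of the form $nJ_{p^s}$, and scaling by a positive integer preserves that property. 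Lemma \ref{twistNPALemma} therefore applies.

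Since $\fg$ is unipotent, $\chi(\widetilde G)$ is trivial, so $\varphi \in \Omega(u(\fg), \chi)$ automatically, and the conclusion of Lemma \ref{twistNPALemma} is that $\widetilde G$ does not have Property PA. I expect the main obstacle to be the module-theoretic bookkeeping in the displayed identification: keeping the $\varphi$-twist, induction-by-stages, and the tensor decomposition $u(\fg) = u(\fh) \otimes u(\fg'')$ mutually consistent, together with the observation that twisting by $\varphi = \varphi_0 \otimes \id$ only modifies the $\fh$-factor of the action.
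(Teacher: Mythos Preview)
Your proposal is correct and follows essentially the same route as the paper's proof: decompose $\fg = \fh \oplus \fg''$ with $\fh$ primordial wild as in Corollary~\ref{primordial}, extend the automorphism $\varphi_0$ from Proposition~\ref{primNPA} to $\varphi = \varphi_0 \otimes \id$ on $u(\fg) = u(\fh)\otimes u(\fg'')$, and verify the hypotheses of Lemma~\ref{twistNPALemma}. The paper's own proof is a two-line sketch (``it is straightforward to show that the isotropy \dots\ extends \dots, which lets us apply Lemma~\ref{twistNPALemma}''), and your displayed identification together with the Krull--Schmidt observation that a positive multiple of a non-$nJ_{p^s}$ module remains non-$nJ_{p^s}$ is exactly the detail that sketch suppresses.
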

\begin{proof}
    Suppose $\fg = \fg' \oplus \fg''$ for an abelian restricted Lie algebra $\fg'$ of wild representation type with no nontrivial wild direct summands. Then $A = u(\fg) = u(\fg')\otimes u(\fg''). $ It is straightforward now to show that the isotropy $\varphi \in \Omega(u(\fg'), \fp)$ in Proposition \ref{primNPA} extends to an isotropy in $\Omega( A, \fp)$, which lets us apply Lemma \ref{twistNPALemma}.
\end{proof}

\subsection{The Ba{\v s}ev-Conlon formula}\label{tameSection}
Let $k$ be an algebraically closed field of characteristic $p = 2$. We let $\fg = \fn_1\oplus \fn_1$ and $A  = u(\fg) = k[x,y]/(x^2, y^2).$ We will denote by $P$ the regular representation, i.e. $A$ as a left module over itself.  
We begin by defining the indecomposable modules $V_{2n}(\fp)$ for closed points $\fp \in \PP^1 = \Xscr(A)$ in $ \Proj H^*(A, k).$ By $\fp = [a : b] \in \PP^1$, we will always mean $\fp = \fp(\alpha)$ for the $\pi$-point 
\[\alpha : k[t]/t^2 \to A\] defined by taking $t \mapsto ax + by,$ which we note is well defined up to linear equivalence. 

The indecomposable modules over the Kronecker algebra $A$ were classified by Ba{\v s}ev \cite{Basev61} in 1961 as representations over $k$ of the Klein 4-group. They are attributed back to Kronecker's original work on quadratic forms appearing between 1890-96 \cite{Kronecker1890}. We present the following theorem which lists all finite $A$-modules having support containing exactly one closed point in $\Xscr(A) = \PP^1.$

\begin{thm}\label{kroneckerClassificationThm}
    Let $\fp \in \Xscr(A) = \PP^1$ be the point $[a : b]$ in our coordinate system, and define $s_2 = ax + by \in A$ and $s_1 = cx + dy$ such that $s_1, s_2$ forms a basis for the subspace $\langle x , y\rangle \subset A.$
    
    Let $M$ denote a vector space of dimension $2n$, with $k$-linear decomposition into \emph{lower and upper} blocks $M = M_\ell \oplus M_{u}$, with $M_\ell, M_u$ each of dimension $n$. We let $V_{2n}(\fp) = M$ denote the $A$-module defined by the following matrix representations of the actions of $s_1, s_2 \in A$
    \[s_1 : \begin{pmatrix}0 & I_n\\0 &0\end{pmatrix},\quad s_2 :
    \begin{pmatrix}0 &\mathfrak{N}_n\\0 &0    \end{pmatrix}, \]
    where $I_n$ is the diagonal ones matrix and $\mathfrak{N}_n$ is an upper triangular nilpotent Jordan block of rank $n - 1.$

    Then we have that
    \begin{enumerate}[I.]
        \item The module $V_{2n}(\fp)$ is, up to isomorphism, not dependent on choice of $a, b, c, d,$ such that $\fp = [a : b] \in \PP^1$, and such that $s_1 = cx + dy$ and $s_2$ are linearly independent,
        \item The module $V_{2n}(\fp)$ is indecomposable,
        \item The support $\Xscr(A, V_{2n}(\fp))$ is $\{\fp\} \subset \PP^1,$ and
        \item Any finite indecomposable module $V$ with support $\Xscr(A,V) = \{\fp\}$ is of even dimension $2n$ and is isomorphic to $V_{2n}(\fp)$, for some $n$. 
    \end{enumerate}
\end{thm}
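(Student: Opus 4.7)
The strategy is to establish (I)--(III) by direct matrix calculation from the block presentation $M = M_\ell \oplus M_u$, and to deduce (IV) by reducing any indecomposable supported at $\{\fp\}$ to a Kronecker matrix pencil. The main obstacle lies in (IV): one must first show that an indecomposable with nonempty support is annihilated by $xy$, thereby cutting $M$ down to a pencil problem, and then match the $\PP^1$-parameter of the Kronecker classification with the cohomological parameter $\fp \in \Xscr(A) = \PP^1$.

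For (I), two valid choices of $s_1$ (with $s_2 = ax+by$ fixed and $s_1, s_2$ linearly independent) are related by $s_1' = \lambda s_1 + \mu s_2$ with $\lambda \neq 0$. In the matrix presentation the new $s_1'$ acts with top-right block $\lambda I_n + \mu \mathfrak{N}_n$, and a block-diagonal change of basis $\phi = \begin{pmatrix} P & 0 \\ 0 & Q \end{pmatrix}$ realizes an iso between the two presentations, once $P$ is chosen so that $P \mathfrak{N}_n P^{-1} = \lambda \mathfrak{N}_n + \mu \mathfrak{N}_n^2$ (possible because both are rank-$(n-1)$ nilpotent single Jordan blocks). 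Rescaling $(a, b) \to (\gamma a, \gamma b)$ is absorbed by a scalar twist on $M_u$. For (II), the socle $M_u = \{m : s_1 m = s_2 m = 0\}$ is characteristic, so every endomorphism has block form $\begin{pmatrix} A & 0 \\ B & D \end{pmatrix}$; commutation with $s_1$ forces $A = D$, and commutation with $s_2$ then forces $A \mathfrak{N}_n = \mathfrak{N}_n A$. The centralizer $k[\mathfrak{N}_n]$ of a single Jordan block is local, so $\End_A(V_{2n}(\fp))$ is local and $V_{2n}(\fp)$ is indecomposable. For (III), given a $\pi$-point $\alpha : t \mapsto a'x + b'y = \lambda s_1 + \mu s_2$, the element $t$ acts on $V_{2n}(\fp)$ with top-right block $\lambda I_n + \mu \mathfrak{N}_n$; this block is invertible iff $\lambda \neq 0$ iff $[a' : b'] \neq \fp$, in which case $\ker = \im = M_u$ and $\alpha^* V_{2n}(\fp)$ is free, whereas if $[a':b'] = \fp$ the block is $\mu \mathfrak{N}_n$ of rank $n-1$ and $\alpha^* V_{2n}(\fp)$ is not free.

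For (IV), let $M$ be indecomposable with $\Xscr(A, M) = \{\fp\}$. First reduce to $xy \cdot M = 0$: if $xym \neq 0$ for some $m \in M$, then $\{m, xm, ym, xym\}$ is linearly independent so $Am \cong A$; because $A$ is a finite-dimensional group algebra, hence Frobenius and self-injective, this copy of $A$ splits off as a direct summand. But $A$ is projective with $\Xscr(A, A) = \emptyset$, contradicting indecomposability of $M$ together with $\Xscr(A, M) \neq \emptyset$. Hence $xy \cdot M = 0$, so $\operatorname{rad}(A) M \subseteq \soc M$, and the module structure on $M$ is captured by a pair of linear maps $(\bar x, \bar y) : M/\soc M \to \soc M$ up to change of basis in each factor. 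This is precisely the Kronecker matrix pencil problem, equivalent to representations of the $\widetilde{A}_1$ quiver. By the classical classification \cite{Basev61, Kronecker1890}, the indecomposable pencils are the preprojective/preinjective strings of odd total dimension (which one checks directly have support all of $\PP^1$, since no single $a'x + b'y$ can act freely on them) and the $2n$-dimensional regular modules indexed by $\PP^1 \times \NN$; the regular with parameter $\fp'$ has support exactly $\{\fp'\}$. Matching via the change of basis $(s_1, s_2) \leftrightarrow (x, y)$ identifies the regular of parameter $(\fp, n)$ with $V_{2n}(\fp)$. The chief difficulty is the final parameter-matching, which requires verifying that the $\PP^1$ indexing Kronecker regulars corresponds bijectively to $\Xscr(A) = \Proj H^*(A, k)$ under the $(s_1, s_2) \leftrightarrow (x, y)$ transformation, and that the odd-dimensional strings genuinely have full support.
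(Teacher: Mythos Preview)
The paper does not prove this theorem: it is stated as a classical result, attributed to Ba{\v s}ev \cite{Basev61} and Kronecker \cite{Kronecker1890}, and used without further argument. Your proposal therefore supplies what the paper deliberately omits, and the outline is sound. A few small corrections are worth noting. In (II), since the socle is the \emph{first} block (the common image of $s_1$ and $s_2$), endomorphisms are block \emph{upper} triangular $\begin{pmatrix} A & C \\ 0 & D \end{pmatrix}$; commutation with $s_1$ forces $A = D$ but imposes no constraint on $C$, so $\End_A(V_{2n}(\fp))$ is the set of such matrices with $A$ in the centralizer $k[\mathfrak{N}_n]$, not $k[\mathfrak{N}_n]$ itself---still local, so indecomposability follows as you say. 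In (I), the conjugation you actually need takes $\mathfrak{N}_n(\lambda I_n + \mu \mathfrak{N}_n)^{-1}$ to $\mathfrak{N}_n$ (not quite the formula you wrote), but your justification---both are rank-$(n-1)$ nilpotents, hence single Jordan blocks---is exactly right. Your parity observation in (IV), that an odd-dimensional module can never restrict to a free $k[t]/t^2$-module and therefore has full support, is a clean way to exclude the string modules.
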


The main result for this section is the \textit{Ba{\v s}ev-Conlon formula for the noble square}, which holds for an arbitrary choice of $\otimes$ coming from a Group algebra structure in $G \in \Gscr_A.$
\begin{equation}\label{nobleSquare}
    V_{2n}(\fp) \otimes V_{2n}(\fp) \cong 2V_{2n}(\fp) + (n^2 - n)P \quad \forall \text{ noble } \fp \in \PP^1\text{ for }G. 
\end{equation}
As a historical note, we have named formula \ref{nobleSquare} after Ba{\v s}ev and Conlon jointly because Conlon's paper \cite{Conlon65} offered a correction to Ba{\v s}ev's formulas presented in \cite{Basev61} in the case of the Klein 4 group as a point in $\Gscr_A.$ We find actually that the set of noble points in this case coincides precisely with the modules where Conlon's formulas agree with Ba{\v s}ev's. By following Ba{\v s}ev's original ring-theoretic argument (Lemmas \ref{ProjComponent}, \ref{BasevLinearFunctionalLemma} below), we will see that the isomorphism types of squares $V_{2n}(t) \otimes V_{2n}(t)$ are sufficient for determining the entire Green ring structure. We have ommitted any discussion of nonprojective indecomposables having support other than a closed singleton $\{\fp\}$, because these are not relevant to our Property PA. Still, we would be remiss not to mention that all such modules are Heller shifts of the trivial module and tensor with all other modules in a way that does not depend on the coalgebra structure for $A$!  

We will fix notation from Theorem \ref{WangClassification2} throughout, that a complete set of orbit representatives for $\Gscr_A /\Aut(A)$ is given by $\widetilde G, G_1, G_2, G_3$, with comultiplications $\widetilde \Delta, \Delta_1, \Delta_2, \Delta_3$ and products $\widetilde \otimes, \otimes_1, \otimes_2, \otimes_3$ respectively. To show that $\widetilde G$ is in noble correspondence with a given $G \in \Gscr_A,$ we first argue how $\widetilde G$ is in noble correspondence with $G_1, G_2, G_3$ (it is tautological that $\widetilde G$ is in noble correspondence with itself). Then for a given $G \in \Gscr_A,$ we 
identify the orbit representative $G' = \widetilde G, G_1, G_2, G_3$, with product $\otimes'$ and $\varphi \in \Aut(A)$ with $G^{\varphi} = G'.$
Then for a fixed point $\fp$ which is noble for $G$, we have $\varphi \cdot \fp$ is noble for $G'.$ Given any indecomposable $M, N$ with support $\{\fp\}$, say $M \cong V_{2m}(\fp), N \cong V_{2n}(\fp),$ we have $M^\varphi, N^\varphi$ are indecomposable with support $\{\varphi\cdot \fp\},$ and hence by Theorem \ref{kroneckerClassificationThm} we have $M^{\varphi} \cong V_{2m}(\varphi\cdot \fp), N^{\varphi} \cong V_{2n}(\varphi\cdot\fp)$. Since $\widetilde G$ is in noble correspondence with $G'$ and $\varphi \cdot \fp$ is noble for $G'$, we have
\[M^\varphi \otimes' N^\varphi \cong M^{\varphi} \widetilde \otimes N^\varphi,\]
and hence
\[M \otimes N \cong (M^\varphi \widetilde \otimes N^\varphi)^{\varphi^{-1}}\] by identity \ref{phiDist}. The tensor product property for $\supp_A$, with Theorem \ref{kroneckerClassificationThm}, shows that there is a decomposition
\[V_{2m}(\fp) \widetilde\otimes V_{2n}(\fp) = c_{m, n, P}(\fp) P \oplus \sum_{\ell} c_{m, n,\ell}(\fp) V_{2\ell}(\fp),\]
where $P$ is unique indecomposable projective module. We will show further that the coefficients $c_{m, n, P}(\fp), c_{m, n,\ell}(\fp)$ depend only on $m,n$ and not which noble $\fp$ is chosen. It follows that $\widetilde G$ and $\widetilde G^{\varphi}$ are in noble correspondence for any $\varphi \in \Aut(A)$, since
\[V_{2m}(\fp)\widetilde\otimes^{\varphi} V_{2n}(\fp)  \cong (V_{2m}(\varphi^{-1} \cdot \fp)\widetilde\otimes V_{2n}(\varphi^{-1} \cdot \fp))^{\varphi}. \]
Hence $M\otimes N \cong M \widetilde\otimes^{\varphi^{-1}}N \cong M \widetilde \otimes N$ for indecomposable $M, N$ of support $\{\fp\}$, and we conclude $\widetilde G$ is in noble correspondence with $G.$ 

The following lemmas are proven in detail in the author's unpublished \cite{Bloom24}, but essentially generalize, to any $G \in \Gscr_A$, the same ring-theoretic argument outlined originally by Ba{\v s}ev \cite{Basev61}. 

\begin{lemma}\label{ProjComponent}
    Let $n \le m$, and $\fp \in \PP^1$ be any closed point. Let $\otimes$ be the product for some group $G \in \Gscr_A$. 
    Then 
    \[V_{2n}(\fp) \otimes V_{2m} \cong V \oplus (mn - n)P,\]
    where $V = \sum c_{n,m, \ell }(\fp)V_{2\ell}(\fp)$,
    for Clebsch-Gordon coefficients $c_{n, m, \ell}(\fp)$ relative to $G$, satisfying $\sum c_{n, m, \ell}(\fp)2\ell = 4n.$ Equivalently, we may say $c_{n, m , P}(\fp) = mn - n.$
\end{lemma}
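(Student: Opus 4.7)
\textbf{Proof plan for Lemma \ref{ProjComponent}.}

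The plan is to extract two independent linear relations among $a=c_{n,m,P}(\fp)$ and $\sum_\ell c_\ell\ell$ and solve for $a$. First I would establish the general form of the decomposition: by the tensor product property of cohomological support,
\[
\supp_A\!\bigl(V_{2n}(\fp)\otimes V_{2m}(\fp)\bigr)\;\subseteq\;\supp_A(V_{2n}(\fp))\cap\supp_A(V_{2m}(\fp))\;=\;\{\fp\},
\]
so every non-projective indecomposable summand has support exactly $\{\fp\}$, and by Theorem \ref{kroneckerClassificationThm}(IV) such a summand must be some $V_{2\ell}(\fp)$. Hence
\[
V_{2n}(\fp)\otimes V_{2m}(\fp)\;\cong\;aP\,\oplus\,\bigoplus_\ell c_\ell\,V_{2\ell}(\fp).
\]
A first relation comes from dimension: $4a+\sum_\ell 2\ell\,c_\ell=4nm$. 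A second relation comes from the top. For any $A$-module $X$, $\dim\Hom_A(X,k)=\dim X/\fm X$, and from the matrix presentation one reads off $\dim P/\fm P=1$ and $\dim V_{2\ell}(\fp)/\fm V_{2\ell}(\fp)=\ell$ (since $\fm V_{2\ell}(\fp)=M_\ell$), giving
\[
\dim\Hom_A\!\bigl(V_{2n}(\fp)\otimes V_{2m}(\fp),\,k\bigr)\;=\;a+\sum_\ell c_\ell\,\ell.
\]

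The key step is to compute the left-hand side of the last display by a second, $G$-independent route. I would use tensor--hom adjunction and self-duality. First, $V_{2m}(\fp)^*$ is indecomposable of dimension $2m$ with $\supp_A(V_{2m}(\fp)^*)=\supp_A(V_{2m}(\fp))=\{\fp\}$, so by Theorem \ref{kroneckerClassificationThm}(IV) we have $V_{2m}(\fp)^*\cong V_{2m}(\fp)$ for the antipode of any $G\in\Gscr_A$. Then adjunction for the Hopf algebra $(A,\Delta,S)$ yields
\[
\Hom_A\!\bigl(V_{2n}(\fp)\otimes V_{2m}(\fp),\,k\bigr)\;\cong\;\Hom_A\!\bigl(V_{2n}(\fp),\,V_{2m}(\fp)^*\bigr)\;\cong\;\Hom_A\!\bigl(V_{2n}(\fp),\,V_{2m}(\fp)\bigr),
\]
and the right-hand dimension is purely an invariant of the associative algebra $A$ and the two modules. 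A direct matrix computation using the presentation in Theorem \ref{kroneckerClassificationThm} (solving $\phi s_i = s_i\phi$ for $i=1,2$) then gives $\dim\Hom_A(V_{2n}(\fp),V_{2m}(\fp))=n(m+1)$ for $n\le m$.

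Assembling the two relations $a+\sum_\ell c_\ell\ell=nm+n$ and $2a+\sum_\ell c_\ell\ell=2nm$ yields $a=mn-n$, and the identity $\sum_\ell 2\ell\,c_\ell=4n$ follows from dimension counting. The main obstacle is the $\dim\Hom$ computation in the last step: although conceptually routine, it requires a careful analysis of the commutant of the pair of nilpotent matrices $s_1,s_2$ acting on $V_{2n}(\fp)$, and amounts to classifying intertwiners of two Jordan-like string modules. This is precisely the ring-theoretic content of Ba{\v s}ev's original linear functional argument, which is what Lemma \ref{BasevLinearFunctionalLemma} will formalize; the cleanliness of the whole scheme rests on the fact that $\dim\Hom_A(V_{2n}(\fp),V_{2m}(\fp))$ is an invariant of $A$ alone, so the projective multiplicity $mn-n$ really is uniform across all group scheme structures $G\in\Gscr_A$.
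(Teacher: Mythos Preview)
Your argument is correct. The paper does not actually prove this lemma; it defers to Ba{\v s}ev's original ring-theoretic argument and the author's unpublished notes. Your computation---extracting the projective multiplicity from the two linear functionals $\dim_k(-)$ and $\dim\Hom_A(-,k)$, then evaluating the latter via the Hopf adjunction $\Hom_A(V_{2n}\otimes V_{2m},k)\cong\Hom_A(V_{2n},V_{2m}^*)\cong\Hom_A(V_{2n},V_{2m})$---is exactly Ba{\v s}ev's linear-functional method specialized to this coefficient, and your direct count $\dim\Hom_A(V_{2n},V_{2m})=n(m+1)$ for $n\le m$ is correct. The only remark I would add is that your closing sentence slightly conflates this lemma with Lemma~\ref{BasevLinearFunctionalLemma}: the $\Hom$-dimension calculation you do here is what pins down $c_{n,m,P}$, whereas Lemma~\ref{BasevLinearFunctionalLemma} uses further functionals (essentially $\dim\Hom_A(V_{2\ell},-)$ for varying $\ell$) to constrain the remaining $c_{n,m,\ell}$.
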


\begin{lemma}\label{BasevLinearFunctionalLemma}
Let $\otimes$ be the product for some group in $G \in \Gscr_A,$ and for any $n \le m,$ and closed point $\fp \in \PP^1,$ let $c_{n, m, \ell}(\fp)$ be the Clebsch-Gordon coefficients relative to $G$ as above. Then
\begin{enumerate}[I.]
    \item For any $\fp,$ we have $c_{n, m, \ell}(\fp) = 2\delta_{n, \ell}$ whenever there is inequality $n < m,$ and
    \item For any $\fp$, there exists a subset $N(\fp) \subset \NN$, such that no two consecutive numbers are elements of $N(\fp)$, and such that
    \[c_{n, n , \ell}(\fp) = \begin{cases}
        2\delta_{n, \ell} & n \not \in N(\fp),\\
        \delta_{n-1,\ell} + \delta_{n+ 1, \ell} & n \in N(\fp).
    \end{cases}\]
\end{enumerate}
\end{lemma}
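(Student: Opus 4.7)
The plan is to follow Ba{\v s}ev's ring-theoretic approach~\cite{Basev61}, uniformly extended to any $G \in \Gscr_A$. By Lemma~\ref{ProjComponent}, it suffices to determine the Clebsch-Gordon coefficients $c_{n,m,\ell}(\fp)$, which are already subject to the dimensional constraint $\sum_\ell \ell \, c_{n,m,\ell}(\fp) = 2n$ (from the non-projective dimension $4n$) and the support constraint $c_{n,m,\ell}(\fp) = 0$ for $\ell > 2n$. A second linear constraint will come from restricting $V_{2n}(\fp) \otimes V_{2m}(\fp)$ along the generalized $\pi$-point in the direction of $\fp$: counting the trivial $k[t]/t^2$-summands in the restriction (each $V_{2\ell}(\fp)$ contributes $2$ copies of $J_1$, and $P$ contributes none) expresses $\sum_\ell c_{n,m,\ell}(\fp)$ in terms of the structure of $\Delta(ax+by)$, and narrows the non-projective part to a short list of possible decompositions.

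For Part I, I would induct on $m \ge n + 1$. The short exact sequence
\[0 \to V_{2(m-1)}(\fp) \to V_{2m}(\fp) \to V_2(\fp) \to 0,\]
arising from the block-inclusion visible in the matrix description of Theorem~\ref{kroneckerClassificationThm}, remains exact after tensoring with $V_{2n}(\fp)$. Combining this with the linear constraints above and with the inductive hypothesis produces a recurrence among the $c_{n,m,\ell}(\fp)$ (modulo projective correction) which forces the balanced form $c_{n,m,\ell}(\fp) = 2\delta_{n,\ell}$.

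For Part II, the same constraints allow the non-projective part of $V_{2n}(\fp) \otimes V_{2n}(\fp)$ to take only two forms: the balanced $2V_{2n}(\fp)$, or the skew $V_{2(n-1)}(\fp) \oplus V_{2(n+1)}(\fp)$ (with the convention $V_0 := 0$). I define $N(\fp) \subset \NN$ to be those $n$ for which the skew form occurs. To rule out consecutive integers in $N(\fp)$, suppose $n, n+1 \in N(\fp)$ and write $v_\ell$ for the class of $V_{2\ell}(\fp)$ modulo projectives. Computing the triple product $v_n \cdot v_n \cdot v_{n+1}$ two ways using Part I (for mixed products) and the skew formula (for $v_n^2$ and $v_{n+1}^2$) gives, respectively, $2v_{n-1} + 2v_{n+1}$ and $2v_{n-1} + v_n + v_{n+2}$. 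Associativity then forces $v_n + v_{n+2} = 2v_{n+1}$, which is impossible in the free $\ZZ$-module on indecomposable non-projective classes.

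The hard part will be establishing the two-case dichotomy for Part II uniformly over all $G \in \Gscr_A$: the non-primitive summands appearing in $\Delta(ax+by)$ for the non-noble $\fp$ affect the rank count in nontrivial ways, and ruling out intermediate configurations of the form $V_{2(n-k)}(\fp) \oplus V_{2(n+k)}(\fp)$ with $k \ge 2$ requires case analysis against the explicit comultiplications classified in Theorem~\ref{WangClassification2}, reducing ultimately to a linear-algebraic verification using how each extra term interacts with the block-triangular matrix structure of the indecomposables $V_{2\ell}(\fp)$.
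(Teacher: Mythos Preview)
The paper does not give a self-contained proof of this lemma: it simply states that the argument is Ba\v{s}ev's ring-theoretic method from \cite{Basev61}, generalized to arbitrary $G\in\Gscr_A$, with details deferred to the unpublished \cite{Bloom24}. Your proposal follows exactly that template, so the approach matches. In particular, your associativity computation ruling out consecutive elements of $N(\fp)$ is correct and is the heart of Ba\v{s}ev's argument: writing $v_\ell$ for $[V_{2\ell}(\fp)]$ modulo projectives, the two evaluations of $v_n^2 v_{n+1}$ you describe give $2v_{n-1}+v_n+v_{n+2}$ versus $2v_{n-1}+2v_{n+1}$, which cannot agree in the free $\ZZ$-module on non-projective indecomposables.

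One place where your outline needs tightening: the two linear constraints you isolate, $\sum_\ell \ell\, c_{n,m,\ell}(\fp)=2n$ from Lemma~\ref{ProjComponent} and $\sum_\ell c_{n,m,\ell}(\fp)$ from the $J_1$-count along the $\pi$-point, do \emph{not} by themselves force $c_{n,m,\ell}=2\delta_{n,\ell}$ in Part~I. For instance $c_{n-1}=c_{n+1}=1$ satisfies both constraints equally well when the $J_1$-count gives $\sum c_\ell=2$, and for non-noble $\fp$ the count can even give $\sum c_\ell=1$ (as happens for $G_1$ at $\fp=[0{:}1]$, where $V_2\otimes_1 V_2\cong V_4$). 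So your short-exact-sequence induction for Part~I has to do real work beyond these constraints, and you should make explicit which additional invariant or which associativity relation closes the gap. Similarly, for Part~II the same two constraints allow every configuration $V_{2(n-k)}\oplus V_{2(n+k)}$, and associativity alone does not exclude $k\ge 2$; you correctly flag this as the hard step, and your plan to handle it by explicit case analysis against the four comultiplications of Theorem~\ref{WangClassification2} is what the paper's reference to \cite{Bloom24} presumably covers.
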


Now we see the Ba{\v s}ev-Conlon formula \ref{nobleSquare} is stating that, for any noble point $\fp \in \PP^1$ for a fixed $G \in \Gscr_A$, we have $N(\fp) = \emptyset.$ To show this, we first examine which points are noble in the fixed homogeneous coordinate system for $\PP^1$ stated in \ref{kroneckerClassificationThm}.
\begin{example}
    Let $G$ be an element of the complete set of representatives $\{\widetilde G, G_1, G_2, G_3\}$ for $\Gscr_A / \Aut(A).$  
    \begin{enumerate}\setcounter{enumi}{-1}
        \item For $G = \widetilde G$, each $\fp \in \PP^1$ is noble, as $\langle ax + by\rangle$ always defines a Lie subalgebra of $\langle x, y\rangle.$
        \item For $G = G_1,$ the only noble point is $[1 : 0].$ This is because $\GG_{a(2)}$ has only one nontrivial proper closed subgroup, and this subgroup is isomorphic to $\GG_{a(1)}$, and the inclusion of group algebras comes from $t \mapsto x.$
        \item For $G = G_2,$ we have $G \cong \GG_{a(1)} \times \ZZ / 2.$ One sees in any characteristic that the only nontrivial proper subgroups of $\GG_{a(1)} \times \ZZ / p$ are $\GG_{a(1)} \times 0$ and $0 \times \ZZ / p$, with group algebra inclusions given by $t \mapsto x$ and $t\mapsto y$ respectively. Hence the only noble points for $G_2$ are $[1 : 0]$ and $[0 : 1].$
        \item For $G = G_3,$ there are three nontrivial proper subgroups of $(\ZZ / 2)^2$, with group algebra inclusions given by $t \mapsto x, t\mapsto x + y + xy,$ and $t \mapsto y.$ By \cite[Proposition 2.2]{FrPev05}, only the linear terms matter, and we have the noble points for $G_3$ are therefore $[1: 0], [1 : 1],$ and $[0 : 1].$
    \end{enumerate}
\end{example}

\begin{definition}
    Let $\fp$ be fixed and denote $V_{2n} = V_{2n}(\fp)$. For $n, m \in \NN$ we define the \emph{canonical extension}
    \[0 \to V_{2n} \to V_{2(n + m)} \to V_{2m} \to 0\]
    of $V_{2m}$ by $V_{2n}$, with \emph{canonical mono} $\mu_{n,m} : V_{2n} \hookrightarrow V_{2(n + m)}$ defined by the induced map from including ordered bases for respective upper and lower blocks (\ref{kroneckerClassificationThm}),  and \emph{canonical epi} $\epsilon_{n, m} : V_{2(n + m ) } \twoheadrightarrow V_{2m}$ by the quotient defined on the ordered bases for upper and lower blocks. It is clear the canonical monos and epis are $A$-linear and define an extension of $A$-modules.  
\end{definition}
The following proposition follows from direct examination of the modules in Ba{\v s}ev's Theorem \ref{kroneckerClassificationThm}. 
\begin{prop}\label{classifyingMonos}
    Let $\nu : V_{2n}(\fp) \hookrightarrow V$ be any monomorphism of $A$-modules, with $V = \sum_\ell c_\ell V_{2\ell}(\fp)$ a finite module. Then there exists an $\ell' \ge n,$ with direct summand inclusion $V_{2\ell'}(\fp)\hookrightarrow V,$ and an automorphism $f : V \to V$ of $A$-modules, such that the diagram 
\[\begin{tikzcd}
	{V_{2n}(\fp)} & {V_{2\ell'}(\fp)} & V \\
	&& V
	\arrow["{\mu_{n, \ell' - n}}", hook, from=1-1, to=1-2]
	\arrow["\nu"', hook, from=1-1, to=2-3]
	\arrow[hook, from=1-2, to=1-3]
	\arrow["f", dashed, from=1-3, to=2-3]
\end{tikzcd}\]
    commutes. In other words there is precisely one isomorphism class of monos $V_{2n}\hookrightarrow V$ for each $\ell \ge n$ with $c_\ell \neq 0$. 
\end{prop}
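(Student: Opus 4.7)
The plan is to reduce to the case of an indecomposable target by direct computation in $\Hom_A(V_{2n}(\fp), V_{2\ell}(\fp))$, then assemble a block-form automorphism of $V$ for the general case. First I would write a module map $\nu : V_{2n}(\fp) \to V_{2\ell}(\fp)$ in coordinates as $\nu(u_i) = \sum_j a_{ij} U_j + \sum_j b_{ij} L_j$, using the upper/lower basis of Theorem \ref{kroneckerClassificationThm}. The compatibility conditions $\nu(s_1 u_i) = s_1 \nu(u_i)$ and $\nu(s_2 u_i) = s_2 \nu(u_i)$ force the matrix $(a_{ij})$ to be Toeplitz (constant along each diagonal $j - i = \mathrm{const}$) with all strictly-above-diagonal entries zero, while the socle coefficients $b_{ij}$ remain free. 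Injectivity of $\nu$ forces the common diagonal value $\alpha := a_{ii}$ to be nonzero and hence $\ell \ge n$.

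With this in hand, the indecomposable-target case is settled by constructing $f \in \End_A(V_{2\ell})$ with the same Toeplitz and socle parameters as $\nu$ on the first $n$ generators and the remaining Toeplitz coefficients set to zero. Since $f$'s leading coefficient is $\alpha \ne 0$, $f$ is an automorphism, and direct evaluation on the basis of $V_{2n}$ gives $f \circ \mu_{n, \ell - n} = \nu$. For the general case $V = \bigoplus V_{2\ell_i}(\fp)$ I would decompose $\nu$ into components $\nu_i : V_{2n} \to V_{2\ell_i}$; injectivity forces some $\nu_{i^*}$ to have $\alpha_{i^*} \ne 0$, yielding $\ell' := \ell_{i^*} \ge n$ as the candidate. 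Then assemble $f \in \Aut(V)$ in lower-triangular block form whose $(i^*, i^*)$-block is the indecomposable-case automorphism, whose remaining diagonal blocks are identities, and whose off-diagonal $(j, i^*)$-blocks are module maps $h_j : V_{2\ell_{i^*}} \to V_{2\ell_j}$ satisfying $h_j \circ \mu_{n, \ell_{i^*} - n} = \nu_j$; such $f$ is automatically an automorphism since its diagonal is invertible.

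The main obstacle is constructing each $h_j$. The Toeplitz structure of $\Hom_A(V_{2\ell_{i^*}}, V_{2\ell_j})$ imposes constraints depending on the relative sizes of $\ell_{i^*}$ and $\ell_j$, so the existence of each $h_j$ must be verified case by case; the choice of $i^*$ --- for instance, making $\ell_{i^*}$ extremal in an appropriate sense among summands supporting a nonzero leading contribution from $\nu$ --- is essential for ensuring every $\nu_j$ lies in the image of the precomposition map $h \mapsto h \circ \mu_{n, \ell_{i^*} - n}$. The uniqueness statement (that distinct $\ell'$ give distinct isomorphism classes) then follows since the value of $\ell'$ is determined by an automorphism-invariant of $\nu$, for instance the largest dimension among indecomposable summands of $V$ whose corresponding component map has nonzero leading Toeplitz coefficient.
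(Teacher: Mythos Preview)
Your approach differs from the paper's: you work directly with $\Hom$-spaces and an explicit block automorphism, whereas the paper classifies extensions $0 \to V_{2n} \to V \to V' \to 0$ by computing $\Ext^1_A(V', V_{2n})$ from a minimal resolution and reading off which cocycles give middle term isomorphic to $V$. The Ext method has the advantage that the invariant distinguishing isomorphism classes of monos is simply the isomorphism type of the cokernel $V'$, which is manifestly automorphism-invariant; this sidesteps the case analysis you anticipate for building the $h_j$.

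There is a genuine gap in your uniqueness argument. The proposed invariant --- the largest $\ell$ among summands whose component of $\nu$ has nonzero leading Toeplitz coefficient --- is \emph{not} preserved by automorphisms of $V$. For instance, take $n=2$, $V = V_4 \oplus V_6$, and $\nu = (\id_{V_4},\, \mu_{2,1})$. Both components have nonzero leading coefficient, so your invariant would read $\ell = 3$. But the automorphism $f = \begin{pmatrix} \id & 0 \\ -\mu_{2,1} & \id \end{pmatrix}$ of $V$ carries $\nu$ to $(\id_{V_4}, 0)$, whose invariant is $\ell = 2$. The correct $\ell'$ here is $2$ (the cokernel is $V_6$), so if you had chosen $i^*$ to maximize $\ell_{i^*}$ you would be aiming at the wrong standard form. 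This same example shows your existence sketch is underdetermined: a single block-triangular $f$ with only the $i^*$-th column modified does not suffice in general, and you will need iterated row operations (or, equivalently, a more careful choice of $i^*$ together with a proof that the precomposition map $h \mapsto h \circ \mu_{n,\ell_{i^*}-n}$ is surjective onto the relevant components). The cokernel viewpoint makes both existence and uniqueness transparent without this bookkeeping.
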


\begin{proof}
    Fix $\fp$ and denote $V_{2n} = V_{2n}(\fp).$ Let $V$ be a finite dimensional $A$-module with $\supp(V) = \{\fp\}$, and with no projective direct summand. Let $\nu : V_{2n} \hookrightarrow V$ be any mono. We have $V' = \coker(\nu)$ has support $\{\fp\}$ \cite{FrPev05}, and has no projective direct summand.
    Computation with a minimal resolution of each $V_{2n}$ shows that $\Ext^1_A(V_{2n}, V_{2m})$ is a natural subquotient of $nV_{2m}$. The resultant complex has basis of cocycles given by the lower block of each $V_{2m}$, plus the first basis element of the upper blocks. Reducing to cohomology, the first basis element of the upper blocks, and last basis element of the lower blocks give a basis for $\Ext^1_A(V_{2n}, V_{2m})$.

    Write $V' = \sum_\ell d_{\ell} V_{2\ell},$ so we have
    $\Ext^1_A(V', V_{2n}) = \sum_\ell d_{\ell}\Ext^1_A(V_{2\ell}, V_{2n})$. 
    Then an arbitrary linear Baer sum combination of extensions represented by our choice of cocycles in each copy of $V_{2n}$ shows that any with center term isomorphic to $V$ has monomorphism 
    $V_{2n} \hookrightarrow V$ in the form claimed. In particular, the assumed extension 
    \[V_{2n} \xrightarrow{\nu} V \to V' \in \Ext^1_A(V', V_{2n})\] has $\nu$ of the form claimed. 
\end{proof}


Now we give an original proof of the Ba{\v s}ev-Conlon formula, using nobility in an essential way. This was first provided in the unpublished \cite{Bloom24}.
\begin{thm}\label{mainSqTheorem}
    Let $G$ be an element of the complete set of representatives $\{ \widetilde G, G_1, G_2, G_3\}$ for $\Gscr_A/\Aut(A),$ and $c_{n, m, \ell}(\fp)$ be the Clebsch-Gordon coefficients relative to $G$. Let $\fp \in \PP^1$ be noble for $G$. Then $c_{n, n, \ell}(\fp) = 2\delta_{n, \ell}$ for each $n, \ell,$ i.e. \[V_{2n}(\fp )\otimes V_{2n}(\fp) \cong 2V_{2n}(\fp) \oplus (n^2 -n)P,\]
    where $\otimes$ is the product for $G$.
\end{thm}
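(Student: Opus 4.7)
By Lemma \ref{BasevLinearFunctionalLemma}, $V_{2n}(\fp)^{\otimes 2}$ is isomorphic to either $2V_{2n}(\fp) \oplus (n^2-n)P$ (the desired formula) or $V_{2(n-1)}(\fp) \oplus V_{2(n+1)}(\fp) \oplus (n^2-n)P$. Since the latter contains no $V_{2n}(\fp)$ summand, it suffices by Krull-Schmidt to exhibit $V_{2n}(\fp)$ as \emph{any} direct summand of $V_{2n}(\fp)^{\otimes 2}$; the multiplicity $2$ and the projective part will then follow automatically from the dichotomy together with dimension counting.

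For the base case $n = 1$, nobility enters decisively: the witnessing subgroup inclusion $\alpha : kC \hookrightarrow kG$ with $kC \cong k[t]/t^2$ identifies $V_2(\fp) \cong k\uparrow^G_C$, and inspection of the matrix presentation in Theorem \ref{kroneckerClassificationThm} (noting that $\alpha(t) \in A$ acts on $V_{2n}(\fp)$ via the $s_2$-matrix there, with $\mathfrak{N}_n$ of rank $n-1$) gives $V_{2n}(\fp)\downarrow^G_C \cong (n-1) kC \oplus 2k$. The projection formula then yields uniformly for every $G \in \Gscr_A$
\[
V_{2n}(\fp) \otimes V_2(\fp) \cong \bigl(V_{2n}(\fp)\downarrow^G_C\bigr)\uparrow^G_C \cong (n-1)P \oplus 2V_2(\fp),
\]
whose specialization to $n = 1$ is $V_2(\fp)^{\otimes 2} \cong 2V_2(\fp)$, so $1 \notin N(\fp)$.

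For $n \geq 2$, first reduce modulo $\Stab_G(\Aut(A))$ to finitely many representative noble points per $G \in \{\widetilde G, G_1, G_2, G_3\}$: the linear $\GL_2 \subset \Aut(A)$ preserves $\widetilde G$ and acts transitively on $\PP^1$ (so $\fp = [1:0]$ alone suffices in the primitive case), and each of $G_1, G_2, G_3$ already has only a small finite noble set. For each remaining pair $(G, \fp)$, construct the $A$-linear mono
\[
\nu : V_{2n}(\fp) \hookrightarrow V_{2n}(\fp) \otimes V_{2n}(\fp), \qquad \nu(v) = v \otimes s,
\]
where $s = \mu_{1, n-1}(\iota(1))$ is the image of the socle generator $\iota(1)$ of $V_2(\fp) \cong k\uparrow^G_C$ under the canonical mono $\mu_{1, n-1} : V_2(\fp) \hookrightarrow V_{2n}(\fp)$. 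Since $s \in \soc V_{2n}(\fp) = V_{2n}(\fp)^A$, the map $\nu$ is $A$-linear and injective. Proposition \ref{classifyingMonos}, applied after projecting away the projective summands of the codomain, shows $\nu$ factors through a non-projective direct summand $V_{2\ell}(\fp)$ with $\ell \geq n$; Lemma \ref{BasevLinearFunctionalLemma} then forces $\ell \in \{n, n+1\}$.

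The main obstacle is excluding $\ell = n+1$, case-by-case across the four comultiplications $\widetilde\Delta, \Delta_1, \Delta_2, \Delta_3$ of Theorem \ref{WangClassification2}. Concretely, this reduces to verifying that the composition $\epsilon_{n-1, 1}^{\otimes 2} \circ \nu : V_{2n}(\fp) \to V_2(\fp)^{\otimes 2} \cong 2V_2(\fp)$ (with $\epsilon_{n-1,1} : V_{2n}(\fp) \twoheadrightarrow V_2(\fp)$ the canonical epi) is nondegenerate in a way incompatible with factoring through a $V_{2(n+1)}(\fp)$-embedding: in the hypothetical bad case, the would-be $V_{2(n+1)}$ summand forces the image of $\epsilon^{\otimes 2} \circ \nu$ into a specific diagonal subspace of $2V_2(\fp)$, while the explicit form of $s$ places it in a transverse direction. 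Nobility is essential here because the construction of $s$ uses the identification $V_2(\fp) \cong k\uparrow^G_C$, anchoring $\nu$ rigidly to the subgroup $C$ and distinguishing it from the canonical mono $\mu_{n, 1} : V_{2n}(\fp) \hookrightarrow V_{2(n+1)}(\fp)$ which would have to be present in the bad case.
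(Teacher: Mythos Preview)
Your base case is clean and in fact stronger than the paper's: the projection formula applied to $V_2(\fp)\cong k\uparrow^G_C$ gives the full row $V_{2n}(\fp)\otimes V_2(\fp)\cong 2V_2(\fp)\oplus(n-1)P$ at once, uniformly in $G$.

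The inductive step, however, has a genuine gap. The existence of a mono $\nu:V_{2n}\hookrightarrow V_{2n}^{\otimes 2}$ does not by itself exclude the bad alternative $V_{2(n-1)}\oplus V_{2(n+1)}\oplus(n^2-n)P$, since $V_{2n}$ already embeds in $V_{2(n+1)}$ via $\mu_{n,1}$; you recognize this and propose to rule out $\ell=n+1$ by examining $\epsilon_{n-1,1}^{\otimes 2}\circ\nu$. But that composition is identically zero for $n\ge 2$: you chose $s=\mu_{1,n-1}(\iota(1))$, and by the definitions of the canonical maps $\epsilon_{n-1,1}\circ\mu_{1,n-1}=0$ whenever $n\ge 2$, so $\epsilon_{n-1,1}^{\otimes 2}(v\otimes s)=\epsilon_{n-1,1}(v)\otimes 0=0$. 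The proposed test therefore carries no information, and the ``transverse direction'' heuristic cannot be made to work with this particular $s$. (There is also a smaller issue: Proposition~\ref{classifyingMonos} is stated only for codomains with no projective summand, and ``projecting away'' the projective part need not preserve injectivity of $\nu$.)

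The paper's argument avoids this obstacle by working with a different mono, namely $\mu_{n-1,1}\otimes\mu_{n-1,1}:V_{2(n-1)}^{\otimes 2}\hookrightarrow V_{2n}^{\otimes 2}$. Assuming the bad case for $n$, the non-consecutivity clause of Lemma~\ref{BasevLinearFunctionalLemma} forces the good case for $n-1$, so the domain is known; Proposition~\ref{classifyingMonos} then pins down the mono up to isomorphism and hence its cokernel $V_2\oplus(2n-2)P$. Restricting the resulting short exact sequence to the noble subgroup $C$ and counting $J_1$/$J_2$ multiplicities gives a numerical contradiction, uniformly over all four $G$ with no case analysis. If you want to salvage your line of attack, you would need either a different choice of socle element $s$ whose image under $\epsilon_{n-1,1}$ survives, or a different detecting map than $\epsilon_{n-1,1}^{\otimes 2}$.
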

\begin{proof}
    Suppose $\fp = \fp(\alpha)$ for the inclusion $\alpha : kC \to A$ of group algebras, for some $C < G.$ 
    
    Then for $n = 1,$ we have
    \[(V_{2}(\fp) \otimes V_{2}(\fp))\downarrow_{C}^G \cong V_{2}(\fp)\downarrow^G_C \otimes V_{2}(\fp)\downarrow^G_C,\]
    which by direct computation gives $(V_{2}(\fp) \otimes V_{2}(\fp))\downarrow_{C}^G \cong 4J_1$ for any case of $G$. Since also for any case of $G$, we see $V_4(\fp)\downarrow^G_C \cong J_2 \oplus 2J_1$,  we conclude that
    \[V_2(\fp) \otimes V_2(\fp) \cong 2V_2(\fp).\]

    For fixed noble $\fp$ for $G$, we denote $V_{2n} = V_{2n}(\fp).$
    Suppose for contradiction that for some $n > 1,$ we have $V_{2n} \otimes V_{2n} \cong V_{2(n-1)} \oplus V_{2(n + 1)} \oplus (n^2 - n )P.$ By Lemma \ref{BasevLinearFunctionalLemma}, we know then that $V_{2(n-1)} \otimes V_{2(n-1)} \cong 2V_{2(n-1)} \oplus (n^2 - 3n + 2)P$ and that $V_{2(n+1)} \otimes V_{2(n+1)} \cong 2V_{2(n+1)}\oplus (n^2 + n)P.$ 
    Consider now the monomorphism
    \[\mu_{n-1, 1} \otimes \mu_{n-1, 1} : V_{2(n-1)}\otimes V_{2(n-1)} \to V_{2n}\otimes V_{2n}.\]
    
    Recall that $A$ is a Frobenius algebra, and hence any projective summand of $V_{2(n-1)}\otimes V_{2(n-1)}$ must embed into a projective summand of $V_{2n}\otimes V_{2n}$. Thus $\mu_{n-1, 1}\otimes \mu_{n-1, 1}$ is isomorphic to a direct sum of monomorphisms $V_{2(n-1)} \hookrightarrow V$ and $P \hookrightarrow (n^2-n)P,$
    for $V = V_{2(n-1)}\oplus V_{2(n+1)}$. 
    By Proposition \ref{classifyingMonos}, we now have that $\mu_{n-1, 1}\otimes \mu_{n-1, 1}$ is isomorphic to $\mu_{n-1, 0} \oplus \mu_{n-1, 2} \oplus \iota$ for the inclusion $\iota : (n^2 - 3n + 2)P \hookrightarrow (n^2 - n)P.$ The cokernel of $\mu_{n-1, 1}\otimes \mu_{n-1, 1}$ must then be isomorphic to the module $V' = V_2 \oplus (2n-2)P.$

    Consider now the exact sequence of $G$-representations
    \[0 \to V_{2(n-1)} \otimes V_{2(n-1)} \xrightarrow{\mu_{n-1, 1}\otimes \mu_{n-1, 1}} V_{2n} \otimes V_{2n} \to V_2 \oplus (2n-2)P \to 0.\]
    Restricting, we have an exact sequence of $C$-representations
    \[0 \to V_{2(n-1)}\downarrow_{C}^G \otimes V_{2(n-1)}\downarrow_{C}^G \to V_{2n}\downarrow_{C}^G \otimes V_{2n}\downarrow_{C}^G \to V_2\downarrow_{C}^G \oplus (2n-2)P\downarrow_{C}^G \to 0.\]
    This is a contradiction: for any $\ell$, $V_{2\ell}\downarrow_{C}^G = 2J_1 \oplus (\ell - 1)J_2,$ and $P\downarrow_{C}^G \cong 2J_2$. Hence
    \begin{flalign*}
        V_{2(n-1)}\downarrow_{C}^G \otimes V_{2(n-1)}\downarrow_{C}^G &\cong 4J_1 \oplus (2(n-1)^2 - 1) J_2,\\
        V_{2n}\downarrow_{C}^G \otimes V_{2n}\downarrow_{C}^G &\cong 4J_1 \oplus (2n^2-1)J_2,\\
        V_2\downarrow_C^G \oplus (2n-1)P\downarrow_C^G &\cong 2J_1 \oplus (4n-4)J_2,
    \end{flalign*}
    and no such exact sequence exists. 
    By Lemma \ref{BasevLinearFunctionalLemma}, since $c_{n+1, n+1, \ell}(\fp) \neq \delta_{n,\ell} + \delta_{n+2, \ell},$ we have instead that 
    \[V_{2n}(\fp)\otimes V_{2n}(\fp) \cong 2V_{2(n)}(\fp) \oplus (n^2 - n)P.\]
\end{proof}

\begin{cor}
    Let $G \in \{\widetilde G, G_1, G_2, G_3\}$. Then $\widetilde G$ is in noble correspondence with $G$. 
\end{cor}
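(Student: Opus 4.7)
The plan is to assemble the corollary directly from the machinery already in place, principally Theorem~\ref{kroneckerClassificationThm}, Lemmas~\ref{ProjComponent} and \ref{BasevLinearFunctionalLemma}, and the Ba\v{s}ev--Conlon square Theorem~\ref{mainSqTheorem}. Fix $G \in \{\widetilde G, G_1, G_2, G_3\}$ with product $\otimes$, and fix a point $\fp \in \PP^1$ which is noble for $G$. Since every closed point of $\PP^1$ is noble for $\widetilde G$ (the inclusion $\langle ax + by \rangle \hookrightarrow \fg$ being a subalgebra for any $[a:b]$), $\fp$ is automatically noble for $\widetilde G$, so the hypothesis of noble correspondence at $\fp$ is symmetric.

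Next I would reduce to comparing tensor products of indecomposables. By Theorem~\ref{kroneckerClassificationThm}(IV), every finite indecomposable $A$-module with support $\{\fp\}$ is isomorphic to $V_{2n}(\fp)$ for a unique $n \ge 1$. Hence given any pair $M, N$ with $\Xscr(A, M) = \Xscr(A, N) = \{\fp\}$, Krull--Schmidt writes $M \cong \sum_m a_m V_{2m}(\fp)$ and $N \cong \sum_n b_n V_{2n}(\fp)$, and by $\ZZ$-bilinearity of the Green ring product it suffices to verify $V_{2m}(\fp) \otimes V_{2n}(\fp) \cong V_{2m}(\fp) \widetilde\otimes V_{2n}(\fp)$ for all $m, n \ge 1$.

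I would then split the verification into two cases. For $m \neq n$, Lemmas~\ref{ProjComponent} and \ref{BasevLinearFunctionalLemma}(I) together yield the closed formula
\[
V_{2m}(\fp) \otimes V_{2n}(\fp) \;\cong\; 2 V_{2\min(m,n)}(\fp) \oplus (mn - \min(m,n)) P
\]
which holds for every $G \in \Gscr_A$ and every $\fp$, so in particular matches the product for $\widetilde G$. For $m = n$, Theorem~\ref{mainSqTheorem} provides the noble-square formula $V_{2n}(\fp) \otimes V_{2n}(\fp) \cong 2 V_{2n}(\fp) \oplus (n^2 - n) P$, valid at any point noble for $G$; applying the same theorem to $\widetilde G$ (for which $\fp$ is also noble) produces the identical decomposition, and so the two products agree.

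The only real content is that Theorem~\ref{mainSqTheorem} is invoked once for $G$ and once for $\widetilde G$ at the same point $\fp$; there is no further obstacle, since the mixed-dimension case is $G$-independent from the Frobenius reciprocity and cocycle arguments behind Lemmas~\ref{ProjComponent} and \ref{BasevLinearFunctionalLemma}. Stringing these together at every noble $\fp$ gives $M \otimes N \cong M \widetilde\otimes N$ for all indecomposables of support $\{\fp\}$, which is the definition of noble correspondence between $\widetilde G$ and $G$.
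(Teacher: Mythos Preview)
Your argument is essentially the paper's own proof: reduce to indecomposables supported at $\{\fp\}$, then combine Lemma~\ref{ProjComponent}, Lemma~\ref{BasevLinearFunctionalLemma}, and Theorem~\ref{mainSqTheorem} to obtain the closed formula $V_{2n}(\fp)\otimes V_{2m}(\fp)\cong 2V_{2\min(m,n)}(\fp)\oplus(mn-\min(m,n))P$, valid for both $G$ and $\widetilde G$ at any $\fp$ noble for $G$. The one omission is that a module $M$ with $\Xscr(A,M)=\{\fp\}$ may also have projective summands (since $\Xscr(A,P)=\varnothing$), so your Krull--Schmidt decomposition $M\cong\sum_m a_m V_{2m}(\fp)$ should allow copies of $P$ as well; the paper handles this with the one-line observation that $P\otimes V\cong P\,\widetilde\otimes\, V\cong (\dim V)P$ for any $G\in\Gscr_A$, after which your reduction goes through.
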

\begin{proof}
    Let $\fp$ be noble for $G$ (it's already noble for $\widetilde G$), and let $\otimes, \widetilde \otimes$ be the products for $G, \widetilde G$ respectively. It follows from duality that $P \otimes V \cong P\widetilde \otimes V \cong nP$ for any $A$-module $V$ of dimension $n$. By Lemma \ref{BasevLinearFunctionalLemma} and Theorem \ref{mainSqTheorem}, for any $n \le m,$ we have
    \[V_{2n}(\fp) \otimes V_{2m}(\fp) \cong 2V_{2n}(\fp) \oplus (mn - n) P \cong V_{2n}(\fp) \widetilde\otimes V_{2n}(\fp). \]
    This accounts for every pair of indecomposable summands of arbitrary modules $M, N$, with $\supp_A(M) = \supp_A(N) = \{\fp\}$ and therefore for any such $M, N$ we have
    \[M\otimes N \cong M\widetilde \otimes N.\]
\end{proof}
\begin{cor}
    Let $\widetilde G$ be the infinitesimal group scheme for an abelian Lie algebra of tame representation type. Then $\widetilde G$ has Property PA. 
\end{cor}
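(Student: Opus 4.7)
The plan is to prove the corollary by two reductions, both of which lean on material already set up in the paper. First, by Seligman's structure theorem \ref{SeligmanStructure} together with Theorem \ref{RepTypeClassification}, an abelian restricted Lie algebra $\fg$ of tame representation type must have $\Nscr(\fg) \cong \fn_1 \oplus \fn_1$ with $p = 2$. Proposition \ref{PAnullcone} then reduces Property PA for $\widetilde G$ to Property PA for the infinitesimal group scheme associated to $\fn_1 \oplus \fn_1$, so it suffices to fix the unipotent case $\fg = \fn_1 \oplus \fn_1$ and $A = u(\fg) = k[x,y]/(x^2,y^2)$. Note that the character-group hypothesis $\chi(G) = \chi(\widetilde G)$ demanded by Definition \ref{PAdef} is automatic in this unipotent setting: both character groups are trivial.

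Second, for an arbitrary $G \in \Gscr_A$, X.~Wang's classification \ref{WangClassification2} produces an orbit representative $G' \in \{\widetilde G, G_1, G_2, G_3\}$ and an augmented automorphism $\varphi \in \Aut(A)$ such that $G = (G')^\varphi$. The preceding corollary already gives noble correspondence between $\widetilde G$ and each such $G'$, so the remaining task is the $\Aut(A)$-twist argument described in the prose before Theorem \ref{mainSqTheorem}: for a noble point $\fp$ of $G$ and indecomposable modules $M, N$ with support $\{\fp\}$, Theorem \ref{kroneckerClassificationThm} identifies $M \cong V_{2m}(\fp)$, $N \cong V_{2n}(\fp)$ and their twists $M^\varphi \cong V_{2m}(\varphi \cdot \fp)$, $N^\varphi \cong V_{2n}(\varphi \cdot \fp)$, while $\varphi \cdot \fp$ is noble for $G'$. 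Applying the known correspondence between $\widetilde G$ and $G'$ to the twisted modules and transporting back along the identity \ref{phiDist} yields $M \otimes N \cong M \widetilde\otimes N$.

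The step where the ``noble'' hypothesis does real work is in ensuring that this transport is consistent, namely that the Clebsch-Gordon coefficients on the $\widetilde G$-side do not secretly depend on which noble point is landed on after applying $\varphi$. This is the content of the Ba{\v s}ev-Conlon formula (Theorem \ref{mainSqTheorem}) together with Lemma \ref{BasevLinearFunctionalLemma}, which together show that for noble $\fp$ the coefficients $c_{m,n,\ell}(\fp)$ depend only on $(m,n,\ell)$. Because that genuinely hard computation has already been carried out, the main obstacle to the corollary has already been absorbed upstream, and the present proof is the assembly of the two reductions above --- a short paragraph combining Proposition \ref{PAnullcone}, Theorem \ref{WangClassification2}, the identity \ref{phiDist}, and the preceding corollary.
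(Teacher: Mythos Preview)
Your proposal is correct and follows essentially the same approach as the paper: reduce to the unipotent case $\fg = \fn_1^2$ via Theorem \ref{RepTypeClassification} and Proposition \ref{PAnullcone}, then invoke the $\Aut(A)$-twist argument spelled out at the start of Section \ref{tameSection} together with the preceding corollary. The paper's own proof is simply a two-sentence pointer to exactly these ingredients, so you have expanded rather than departed from it.
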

\begin{proof}
    It was argued in the beginning of this section why the previous corollary is sufficient to show that $\widetilde G = \GG_{a(1)}^2,$ in characteristic $2$, has Property PA. Now apply Theorem \ref{RepTypeClassification} and Proposition \ref{PAnullcone}. 
\end{proof}
\printbibliography
\end{document}